\newtheorem{thm}{Theorem}
\newtheorem{lem}{Lemma}
\newtheorem{conj}{Conjecture}
\newtheorem{Obs}{Observation}
\newenvironment{wst}
{\setlength{\leftmargini}{1.5\parindent}
	\begin{itemize}
		\setlength{\itemsep}{-1.1mm}}
	{\end{itemize}}
\begin{document}
	\title{\bf An embedding technique in the study of word-representabiliy of graphs}
	\author[a]{Sumin Huang\thanks{Email: sumin2019@sina.com}}
	\author[b]{Sergey Kitaev\thanks{Email: sergey.kitaev@strath.ac.uk}}
	\author[c]{Artem Pyatkin\thanks{Email: artem@math.nsc.ru}}
	\affil[a]{School of Mathematical Sciences, Xiamen
		University, Xiamen 361005, P.R. China}
	\affil[b]{Department of Mathematics and Statistics, University of Strathclyde, 26 Richmond Street, Glasgow G1 1XH, United Kingdom}
	\affil[c]{Sobolev Institute of Mathematics, Koptyug ave, 4, Novosibirsk, 630090, Russia}
	\date{}
	\maketitle

\begin{abstract}
Word-representable graphs, which are the same as semi-transitively orientable graphs, generalize several fundamental classes of graphs. In this paper we propose a novel approach to study word-representability of graphs using a technique of homomorphisms.
As a proof of concept, we apply our method to show word-representability of the simplified graph of overlapping permutations that we introduce in this paper. For another application, we obtain results on word-representability of certain subgraphs of simplified de Bruijn graphs that were introduced recently by Petyuk and studied in the context of word-representability.  
\end{abstract}

\vspace{2mm} \noindent{\bf Keywords}: simplified de Bruijn type graph, word-representability, semi-transitive orientation, homomorphism
\vspace{2mm}

\section{Introduction}
\subsection{Simplified de Bruijn graph and its subgraphs} 
\noindent Let $A(k)=\{0,1,\ldots,k-1\}$ be a $k$-letter alphabet. For any integers $n\geq 2$ and $m\geq 0$, let $A^n_m(k)$ be the following set of words over $A(k)$:
$$A^n_m(k)=\{x_1x_2\ldots x_n:x_i\in A(k) \text{ and } |x_i-x_{i+1}|\geq m\}.$$
In particular, $A^n_0(k)$ is the set of all words over $A(k)$ of length $n$. 

A {\it de Bruijn graph} $B(n,k)$ is a digraph with vertex set $A^n_0(k)$ having an arc from $x_1x_2\ldots x_n$ to $y_1y_2\ldots y_n$ if and only if $x_{i+1}=y_{i}$ for all $i\in \{1,2,\ldots,n-1\}$. De Bruijn graphs are a useful tool in combinatorics on words \cite{Lothaire} and they find applications in several areas outside of mathematics, for example, in bioinformatics \cite{Pevzner}.

The notion of the {\em simplified de Bruijn graph} is introduced in  \cite{Petyuk}, and we believe it to be a very natural and potentially useful tool in graph theory and its various applications. The simplified de Bruijn graph $S(n,k)$ is the simple graph obtained from $B(n,k)$ by removing orientations and loops and replacing multiple edges between a pair of vertices by a single edge.
Denote by  $S_m(n,k)$ the induced subgraphs of $S(n,k)$ with vertex set $A^n_m(k)$. 
See Figure~\ref{graph-ex-1} for examples of just introduced objects.  

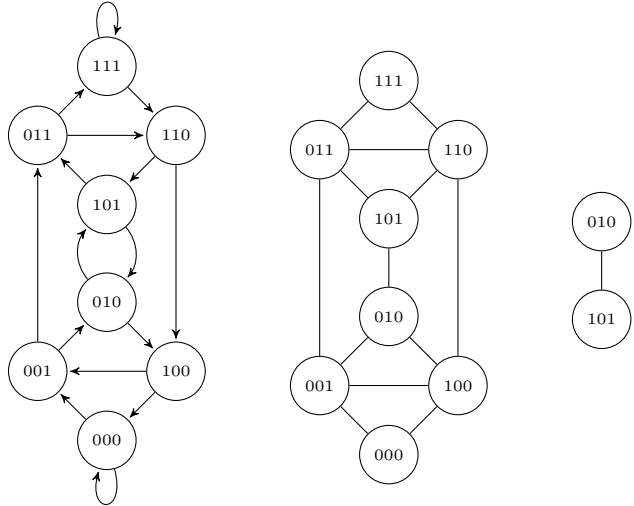
\begin{figure}
\begin{center}
\begin{tabular}{ccccc}


\begin{tikzpicture}[->,>=stealth',shorten >=1pt,node distance=1.3cm,auto,main node/.style={circle,draw,align=center}]

\node[main node] (1) {\tiny{111}};
\node[main node] (2) [below left of=1] {\tiny{011}};
\node[main node] (3) [below right of=1] {\tiny{110}};
\node[main node] (4) [below right of=2] {\tiny{101}};
\node[main node] (5) [below of=4] {\tiny{010}};
\node[main node] (6) [below left of=5] {\tiny{001}};
\node[main node] (7) [below right of=5] {\tiny{100}};
\node[main node] (8) [below left of=7] {\tiny{000}};

\path
(1) edge node {} (3);
\path
(8) edge node {} (6);
\path
(4) edge node {} (2)
      edge [bend left=40] node  {} (5);
\path
(5) edge node {} (7)
     edge [bend left=40] node  {} (4);
\path
(3) edge node  {} (4)
     edge node  {} (7);
\path
(7) edge node  {} (6)
     edge node  {} (8);
\path
(6) edge node  {} (5)
     edge node  {} (2);
\path
(2) edge node  {} (1)
     edge node  {} (3);

\path
(1) edge [loop above] node {} (1);
\path
(8) edge [loop below] node {} (8);

\end{tikzpicture}

& \ \ \ &

\begin{tikzpicture}[node distance=1.3cm,auto,main node/.style={circle,draw,align=center}]

\node[main node] (1) {\tiny{111}};
\node[main node] (2) [below left of=1] {\tiny{011}};
\node[main node] (3) [below right of=1] {\tiny{110}};
\node[main node] (4) [below right of=2] {\tiny{101}};
\node[main node] (5) [below of=4] {\tiny{010}};
\node[main node] (6) [below left of=5] {\tiny{001}};
\node[main node] (7) [below right of=5] {\tiny{100}};
\node[main node] (8) [below left of=7] {\tiny{000}};
\node (9) [below of=8, yshift=0.5cm] {};

\path
(1) edge node {} (3);
\path
(8) edge node {} (6);
\path
(4) edge node {} (2);
\path
(5) edge node {} (7)
     edge node  {} (4);
\path
(3) edge node  {} (4)
     edge node  {} (7);
\path
(7) edge node  {} (6)
     edge node  {} (8);
\path
(6) edge node  {} (5)
     edge node  {} (2);
\path
(2) edge node  {} (1)
     edge node  {} (3);

\end{tikzpicture}

& \ \ \ &

\begin{tikzpicture}[node distance=1.3cm,auto,main node/.style={circle,draw,align=center}]

\node[main node] (1) {\tiny{010}};
\node[main node] (2) [below of=1] {\tiny{101}};
\node (3) [below of=2] {};
\node (4) [below of=3] {};

\path
(1) edge node {} (2);

\end{tikzpicture}

\end{tabular}

\end{center}

\vspace{-0.8cm}
\caption{From left to right: the graphs $B(3,2)$, $S(3,2)$ and $S_1(3,2)$.}\label{graph-ex-1}
\end{figure}

\subsection{Simplified graphs of overlapping permutations}

\noindent The {\em graph of overlapping permutations} $P(n)$ is defined in a way analogous to the de Bruijn graph $B(n,k)$. However, here we require that the head and tail of adjacent permutations have their letters appear in the same relative order. Formally, the vertex set of $P(n)$ is the set of all $n!$ permutations of $\{1,2,\ldots,n\}$, and there is an arc from a permutation $x_1x_2\ldots x_n$ to a permutation $y_1y_2\ldots y_n$ if and only if, for each $2\leq i<j\leq n$, 
either both $x_i<x_j$ and $y_{i-1}<y_{j-1}$ hold or both $x_i>x_j$ and $y_{i-1}>y_{j-1}$ hold. The graph $P(n)$ is instrumental in solving various problems related to permutations, for example, in constructing a universal cycle for permutations \cite{CDG1992}. 

The {\em simplified graph of overlapping permutations} $SP(n)$ is obtained from $P(n)$ by removing orientations and loops and replacing multiple edges between a pair of vertices by a single edge. To our best knowledge, the notion of the simplified graph of overlapping permutations is introduced in this paper for the first time. See Figure~\ref{graph-ex-2} for examples of just introduced objects. 

\begin{figure}
\begin{center}
\begin{tabular}{ccc}

\begin{tikzpicture}[->,>=stealth',shorten >=1pt,node distance=2cm,auto,main node/.style={circle,draw,align=center}]

\node[main node] (1) {\tiny{123}};
\node[main node] (2) [below of=1] {\tiny{213}};
\node[main node] (3) [right of=1] {\tiny{132}};
\node[main node] (4) [below of=3] {\tiny{312}};
\node[main node] (5) [right of=3] {\tiny{231}};
\node[main node] (6) [below of=5] {\tiny{321}};

\path
(1) edge node {} (3)
      edge [bend left=40] node  {} (5);
\path
(2) edge node {} (1);
\path
(5) edge node {} (6);
\path
(4) edge node {} (1);
\path
(3) edge node {} (6);
\path
(6) edge node {} (4)
      edge [bend left=40] node  {} (2);

\path
(2) edge [bend left=5] node  {} (3);
\path
(3) edge [bend left=5] node  {} (2);

\path
(3) edge [bend left=10] node  {} (4);
\path
(4) edge [bend left=10] node  {} (3);

\path
(4) edge [bend left=5] node  {} (5);
\path
(5) edge [bend left=5] node  {} (4);

\path
(2) edge [bend left=5] node  {} (5);
\path
(5) edge [bend left=5] node  {} (2);

\path
(1) edge [loop above] node {} (1);
\path
(6) edge [loop below] node {} (6);

\end{tikzpicture}

& \ \ \ &

\begin{tikzpicture}[node distance=2cm,auto,main node/.style={circle,draw,align=center}]

\node[main node] (1) {\tiny{123}};
\node[main node] (2) [below of=1] {\tiny{213}};
\node[main node] (3) [right of=1] {\tiny{132}};
\node[main node] (4) [below of=3] {\tiny{312}};
\node[main node] (5) [right of=3] {\tiny{231}};
\node[main node] (6) [below of=5] {\tiny{321}};

\path
(1) edge node {} (3)
      edge [bend left=40] node  {} (5);
\path
(2) edge node {} (1);
\path
(5) edge node {} (6);
\path
(4) edge node {} (1);
\path
(3) edge node {} (6);
\path
(6) edge node {} (4)
      edge [bend left=40] node  {} (2);

\path
(2) edge node  {} (3);

\path
(3) edge node  {} (4);

\path
(4) edge node  {} (5);

\path
(2) edge node  {} (5);

\end{tikzpicture}

\end{tabular}

\end{center}

\vspace{-0.8cm}
\caption{From left to right: the graphs $P(3)$ and $SP(3)$.}\label{graph-ex-2}
\end{figure}
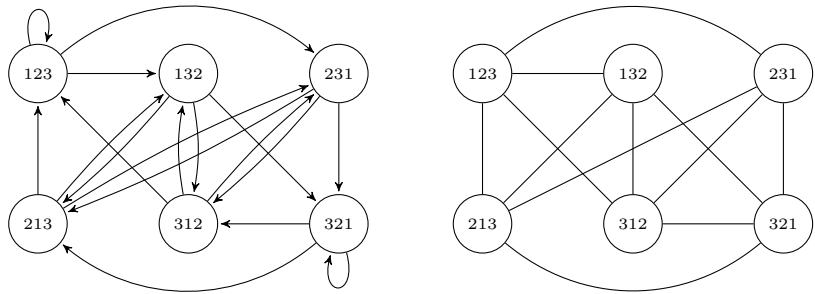

\subsection{Word-representability of graphs}

\noindent 
The literature contains a substantial body of research papers focused on the theory of word-representable graphs, as evidenced by references \cite{K17,KL15,KP2018} and related works. These graphs are of interest due to their connections to algebra, graph theory, computer science, combinatorics on words, and scheduling  \cite{KL15}. Notably, word-representable graphs extend the scope of several important graph classes, including {\em circle graphs}, {\em $3$-colorable graphs}, and {\em comparability graphs}. The ability to represent simplified de Bruijn graphs and simplified graphs of overlapping permutations using words could expand the range of potential applications for these graphs. This provides motivation for studying these graphs from the perspective of word-representability.

Two letters $x$ and $y$ alternate in a word $w$ if after deleting in $w$ all letters but the copies of $x$ and $y$ we either obtain a word $xyxy\cdots$ or a word $yxyx\cdots$ (of even or odd length).  A graph $G=(V,E)$ is {\em word-representable} if and only if there exists a word $w$
over the alphabet $V$ such that letters $x$ and $y$, $x\neq y$, alternate in $w$ if and only if $xy\in E$. 
The unique minimum (by the number of vertices) non-word-representable graph on 6 vertices is the wheel graph $W_5$, while there are 25 non-word-representable graphs on 7 vertices \cite{KL15}. 

An orientation of a graph is {\em semi-transitive} if it is acyclic, and for any directed path $v_0\rightarrow v_1\rightarrow \cdots \rightarrow v_k$ either there is no arc from $v_0$ to $v_k$, or $v_i\rightarrow v_j$ is an arc for all $0\leq i<j\leq k$. An induced subgraph on vertices $\{v_0,v_1,\ldots,v_k\}$ of an oriented graph is a {\em shortcut} if it is acyclic, non-transitive, and contains both the directed path $v_0\rightarrow v_1\rightarrow \cdots \rightarrow v_k$ and the arc $v_0\rightarrow v_k$, that is called the {\em shortcutting edge}. A semi-transitive orientation can then be alternatively defined as an acyclic shortcut-free orientation. A fundamental result in the area of word-representable graphs is the following theorem.

\begin{thm}[\cite{Halldorsson}]\label{semi-trans-thm} A graph is word-representable if and only if it admits a semi-transitive orientation. \end{thm}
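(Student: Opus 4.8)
The plan is to prove the two implications separately. The more routine direction is to pass from a word representation to a semi-transitive orientation; the construction of a representing word from a semi-transitive orientation is where the real work lies, and I would treat it as the heart of the argument.

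For the direction word-representable $\Rightarrow$ semi-transitively orientable, suppose $G=(V,E)$ is represented by a word $w$ over $V$, and write $f(x)$ for the position of the first occurrence of the letter $x$ in $w$. I would orient each edge $xy\in E$ by setting $x\to y$ whenever $f(x)<f(y)$. Since this orientation is compatible with the total order of the $f$-values, it is immediately acyclic. To establish semi-transitivity I would show it is shortcut-free: assume for contradiction a shortcut $v_0\to v_1\to\cdots\to v_k$ with shortcutting arc $v_0\to v_k$. Every path edge $v_iv_{i+1}$ and the edge $v_0v_k$ belongs to $E$, so each of these pairs alternates in $w$, and the orientation forces $f(v_0)<f(v_1)<\cdots<f(v_k)$. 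The key step is to show that these alternation conditions, \emph{together with the presence of the shortcutting edge} $v_0v_k$, force every chord $v_pv_q$ with $0\le p<q\le k$ to alternate as well: chaining the interleaving patterns of consecutive alternating pairs with the pattern imposed by $v_0v_k$ pins down the relative order of the occurrences of $v_p$ and $v_q$ and shows they must alternate. Hence the induced suborientation on $\{v_0,\ldots,v_k\}$ is a transitive tournament, contradicting the non-transitivity demanded of a shortcut.

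For the converse, semi-transitively orientable $\Rightarrow$ word-representable, I would construct a representing word by induction on $|V|$. Since the orientation is acyclic it has a source $s$, and every edge incident to $s$ is oriented out of $s$. By the inductive hypothesis the induced orientation on $G-s$, which remains acyclic and shortcut-free, yields a word $w'$ representing $G-s$. It then remains to insert copies of the letter $s$ into $w'$ so that $s$ alternates with exactly its out-neighbours $N(s)$ and with no other vertex, while preserving every alternation already present among the letters of $w'$. The shortcut-free hypothesis is precisely what controls how the neighbours and non-neighbours of $s$ are interleaved along $w'$, so that a single consistent placement of the copies of $s$ can be found.

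The main obstacle is exactly this reinsertion step. Threading the copies of $s$ through $w'$ so as to alternate with all of $N(s)$ simultaneously, while avoiding spurious alternations with non-neighbours and disturbing none of the existing ones, is delicate, and it is here that the semi-transitive hypothesis must be used in full; without it the insertion can fail, as it must for non-word-representable graphs such as $W_5$. I would therefore expect the bulk of the proof to consist of an explicit rule for placing the occurrences of $s$ and a verification, reading along $w'$, that the resulting word represents $G$.
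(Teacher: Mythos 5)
First, note that the paper does not prove this theorem at all: it is imported as background from Halld\'{o}rsson, Kitaev and Pyatkin \cite{Halldorsson}, so your attempt can only be measured against the published proof. Your first direction is essentially that proof and is sound: orienting each edge away from the letter whose first occurrence comes earlier is clearly acyclic, and you correctly isolate the subtle point, namely that the alternations along the path alone do \emph{not} force the chords (e.g.\ in $abacb$ the pairs $a,b$ and $b,c$ alternate but $a,c$ do not), whereas adding the alternation of $v_0$ with $v_k$ closes the chain of interleavings and, after bookkeeping of occurrence counts (which are non-increasing along the path and can drop by at most one overall), pins down all occurrences and forces every pair $v_p,v_q$ to alternate, so the induced suborientation is a transitive tournament.

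The genuine gap is in the converse. Your induction removes a source $s$ and tries to reinsert it into a word $w'$ representing $G-s$, claiming semi-transitivity of $G$ ``controls how the neighbours and non-neighbours of $s$ are interleaved along $w'$''. It does not: the inductive hypothesis delivers an \emph{arbitrary} word representing $G-s$, carrying no structure tied to the orientation of $G$, and insertion into such a word can be flatly impossible. Concretely, take $G=K_{1,3}$ with centre $s$ oriented as a source (an orientation that is trivially semi-transitive), so $G-s$ is three isolated vertices $a,b,c$, and take $w'=aabbcc$. Since each leaf occurs twice, alternation with a leaf forces $s$ to occur $1$, $2$ or $3$ times. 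One copy of $s$ would have to lie in the three disjoint intervals $(a_1,a_2)$, $(b_1,b_2)$, $(c_1,c_2)$ simultaneously; with two copies, each leaf must have exactly one occurrence strictly inside the window $(s_1,s_2)$, but a contiguous window of $aabbcc$ containing an $a$ and a $c$ contains both $b$'s; with three copies, each leaf $x$ must realize the pattern $sxsxs$, forcing $b_1<s_2<a_2$, contradicting $a_2<b_1$ in $w'$. Yet $K_{1,3}$ is word-representable, e.g.\ by $asabcscb$. So your insertion step is not merely ``delicate'' --- it is false for the word the induction hands you, and no appeal to shortcut-freeness of $G$ can repair it, since $w'$ is unconstrained by the orientation. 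To make any induction of this shape work you would have to strengthen the hypothesis to produce representing words of a special form adapted to the orientation, which is in effect what the actual proof in \cite{Halldorsson} does: it constructs a representant globally from the semi-transitive orientation (processing vertices along a topological order and proving a uniform multiplicity bound, namely that a semi-transitively oriented graph of maximum degree $\Delta$ is $2(\Delta+1)$-representable), rather than by source-deletion and reinsertion.
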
 

The following simple corollary of Theorem~\ref{semi-trans-thm}  is also instrumental for us.

\begin{thm}[\cite{Halldorsson}]\label{3-col-thm}
	Any $3$-colorable graph is word-representable.
\end{thm}

Directly related to our studies is the following theorem by Petyuk \cite{Petyuk}.

\begin{thm}[\cite{Petyuk}]\label{S0n2-thm}
	For positive integers $n$ and $k$ with $k\geq 3$, 
	\begin{wst}
		\item[\rm (i)] $S(n,2)=S_0(n,2)$ is word-representable;
		\item[\rm (ii)] $S(2,k)=S_0(2,k)$ is non-word-representable;
		\item[\rm (iii)] $S(3,k)=S_0(3,k)$ is non-word-representable.
	\end{wst}
\end{thm}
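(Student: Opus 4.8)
The plan is to treat the three parts separately but with two unifying ideas: for part (i) I would exhibit a proper $3$-colouring and invoke Theorem~\ref{3-col-thm}, while for parts (ii)--(iii) I would locate a non-word-representable induced subgraph and use that word-representability is hereditary (deleting from a representing word all letters outside an induced subgraph represents that subgraph). Since the adjacency rule in $S(n,k)$ depends only on equalities between letters, restricting to the subalphabet $\{0,1,2\}$ shows that $S(n,3)$ is an induced subgraph of $S(n,k)$ for every $k\geq 3$; hence for (ii) and (iii) it suffices to treat the base cases $S(2,3)$ and $S(3,3)$.

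For part (i), recall that $u,v$ are adjacent in $S(n,2)$ exactly when one is obtained from the other by deleting the first letter and appending a letter. I would argue by induction on $n$, the base case $S(2,2)$ being the diamond $K_4-e$, which is $3$-colourable. For the step, consider the map $\pi\colon S(n,2)\to S(n-1,2)$ deleting the last letter, $\pi(x_1\cdots x_n)=x_1\cdots x_{n-1}$. A short check shows that if $uv$ is an edge with $v$ a forward shift of $u$, then $\pi(v)$ is a forward shift of $\pi(u)$, and that the only edges $uv$ with $\pi(u)=\pi(v)$ are $\{0^n,0^{n-1}1\}$ and $\{1^n,1^{n-1}0\}$, each incident to a constant word. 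Consequently $\pi$ restricts to a graph homomorphism on the non-constant words, so pulling back a $3$-colouring of $S(n-1,2)$ properly colours every non-constant vertex. Finally the two constant words $0^n$ and $1^n$ are non-adjacent and each has degree $2$ (neighbours $0^{n-1}1,10^{n-1}$ and $1^{n-1}0,01^{n-1}$ respectively), so each can be given a remaining colour greedily; Theorem~\ref{3-col-thm} then yields word-representability.

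For part (ii), two words $ab$ and $cd$ are adjacent in $S(2,k)$ iff $b=c$ or $a=d$. I would exhibit $W_5$ explicitly in $S(2,3)$: take hub $01$, whose neighbours are exactly $00,10,11,12,20$, and check that these five induce the cycle $00\!-\!10\!-\!11\!-\!12\!-\!20\!-\!00$ with no chords. Thus $\{01,00,10,11,12,20\}$ induces $W_5$, which is non-word-representable, and heredity together with the embedding $S(2,3)\hookrightarrow S(2,k)$ finishes the case.

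Part (iii) is where the main difficulty lies. The same $W_5$ strategy is unavailable: in $S(3,3)$ the neighbourhood of any vertex is very sparse — the three forward neighbours form (generically) an independent set, likewise the three backward neighbours, with essentially a single edge between the two groups — so no vertex neighbourhood contains an induced $C_5$, and $W_5$ does not embed. Hence any minimal non-word-representable induced subgraph must have at least seven vertices. I would therefore either search the catalogue of the $25$ seven-vertex non-word-representable graphs for one realisable as an induced subgraph of $S(3,3)$, or argue directly via Theorem~\ref{semi-trans-thm}: assume a semi-transitive orientation of $S(3,3)$, use the rotation triangles such as $\{012,120,201\}$ (which must be oriented transitively) to force arc directions, and derive a directed cycle or a shortcut. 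Pinning down this obstruction — identifying the right seven-vertex subgraph or organising the orientation case analysis — is the step I expect to be hardest.
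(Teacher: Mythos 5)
Your parts (i) and (ii) are correct, but note how they relate to the paper. For (i), the paper (following Petyuk) writes down an \emph{explicit} $3$-colouring of $S_0(n,2)$ by word forms ($e_0b_1$, $e_1b_0$, $o_1e_0a_1$, etc.) in Section~\ref{Pre-sec}, while you prove $3$-colourability by induction through the last-letter-deletion map $\pi$ together with a greedy extension to the two constant words; your collapsing analysis (only $\{0^n,0^{n-1}1\}$ and $\{1^n,1^{n-1}0\}$ are contracted by $\pi$, and the constant words have degree $2$ and are non-adjacent) is sound, so this is a valid and genuinely different route. What the explicit colouring buys that yours does not: the paper reuses those specific colour classes (the forms) later, in the proof of Theorem~\ref{thmspn} and in Lemmas~\ref{lemo2n} and~\ref{leme2n}, so the named classes matter beyond mere $3$-colourability. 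For (ii), the paper offers no proof at all (it quotes Petyuk), so your argument is self-contained extra content; it checks out: the neighbourhood of $01$ in $S(2,3)$ is exactly $\{00,10,11,12,20\}$, these induce the chordless cycle $00-10-11-12-20-00$, so $\{01,00,10,11,12,20\}$ induces $W_5$, and heredity of word-representability plus the induced embedding of $S(2,3)$ into $S(2,k)$ for $k\geq 3$ finishes the case.

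The genuine gap is (iii): you have a plan, not a proof. Showing that $W_5$ does not embed into $S(3,3)$ (and your sparse-neighbourhood argument for this still needs the degenerate cases with repeated letters checked) only tells you what \emph{cannot} work; it does not produce the obstruction. Neither of your two proposed strategies is carried out: you do not identify a seven-vertex non-word-representable induced subgraph, and the rotation-triangle idea (orienting $\{012,120,201\}$ transitively and propagating) is not developed to a directed cycle or a shortcut. To gauge what completing this would take, look at the Appendix of this paper, where non-word-representability of the much smaller graph $S_1(2,4)$ already requires fixing a source via Lemma~\ref{appendix2}, repeatedly applying Lemma~\ref{appendix1}, and a ten-line branching case analysis ending in a shortcut in every branch; a self-contained proof of (iii) would need an analysis of that kind for $S(3,3)$, and nothing in your sketch yet guarantees that the rotation triangles alone force a contradiction. (To be fair, the paper itself also does not prove (iii) — it cites Petyuk and only sketches (i) — but as a blind proof attempt of the stated theorem, your part (iii) is incomplete.)
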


We will provide a sketch of the proof of  Theorem~\ref{S0n2-thm}(i) in Section~\ref{Pre-sec} in order to introduce our notation and color classes that will be used in this paper.

\subsection{Results in this paper}

\noindent In this paper, we continue the research on word-representability of the simplified de Bruijn graph initiated in \cite{Petyuk}, and extend the studies to the simplified graph of overlapping permutations. More specifically, the following conjecture  is stated in \cite{Petyuk}:
\begin{conj}\label{conj} $S(n,k)$ is non-word-representable for $n\geq 4$ and $k\geq 3$. \end{conj}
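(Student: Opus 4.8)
The plan is to prove non-word-representability by exhibiting forbidden induced subgraphs. Recall that word-representability is \emph{hereditary}: if $H$ is an induced subgraph of a word-representable graph $G$, then $H$ is word-representable, since restricting a word representing $G$ to the letters of $H$ represents $H$ (equivalently, by Theorem~\ref{semi-trans-thm}, restricting a semi-transitive orientation of $G$ to $H$ leaves it acyclic and shortcut-free, as any shortcut in $H$ would be a shortcut in $G$). Because $W_5$ is the unique minimum non-word-representable graph, it therefore suffices, for each admissible pair $(n,k)$, to locate a single non-word-representable \emph{induced} subgraph of $S(n,k)$, for instance an induced copy of $W_5$ or of one of the $25$ non-word-representable graphs on $7$ vertices.

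The first and cleanest step removes the parameter $k$. Adjacency in $S(n,k)$ is governed entirely by the length-$(n-1)$ overlap condition $x_{i+1}=y_i$, which makes no reference to the alphabet size; consequently, for every $k\ge 3$ the graph $S(n,3)$ is exactly the induced subgraph of $S(n,k)$ on the vertex subset $A^n_0(3)\subseteq A^n_0(k)$. By heredity it thus suffices to prove that $S(n,3)$ is non-word-representable for every $n\ge 4$, whence the full range $k\ge 3$ follows at once. The base case $n=3$ is Theorem~\ref{S0n2-thm}(iii), but see below for why it does not immediately propagate.

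For the remaining one-parameter problem I would, for each $n\ge 4$, write down an explicit family of six (or seven) words of length $n$ over $\{0,1,2\}$ whose pairwise overlaps realize exactly the edge set of $W_5$ (respectively of a fixed $7$-vertex non-word-representable graph). The guiding principle is to fix a short core that produces the desired adjacency pattern already for small $n$, and then pad it with a controlled block (for example a strictly monotone or alternating segment chosen to avoid any length-$(n-1)$ self-overlaps) so that the same incidence analysis is valid simultaneously for all $n$. One then verifies two things uniformly: that every prescribed $W_5$-edge is present, and, crucially, that no two of the chosen words overlap in length $n-1$ in any unintended way, so that the copy is genuinely induced.

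The main obstacle is precisely this last uniform verification, and the reason it is delicate is the absence of any obvious monotonicity in $n$: the natural maps $A^n_0(3)\to A^{n+1}_0(3)$ obtained by appending, prepending, or duplicating a fixed symbol do not preserve the overlap adjacency, so the known case $n=3$ cannot simply be lifted and a genuinely new witness is required at every length. A further structural subtlety is that in the overlap graph the neighborhood of any vertex splits into its left-shift and right-shift images, between which adjacencies are highly constrained, so realizing the rim $5$-cycle of $W_5$ among the neighbors of a single designated hub is typically impossible. This suggests choosing the vertices of the forbidden subgraph without singling out any one as a hub, or preferring a $7$-vertex witness whose more flexible overlap pattern may embed uniformly more easily than $W_5$ itself.
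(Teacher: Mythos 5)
There is a genuine gap, and in fact a more basic mismatch: the statement you are proving is Conjecture~\ref{conj}, which the paper does \emph{not} prove --- it is stated as an open conjecture (due to Petyuk), and the paper's results (Theorems~\ref{thmspn} and~\ref{thmsnk}) are only partial progress around it, mostly in the \emph{opposite} direction, establishing word-representability of the induced subgraphs $S_m(n,k)$ for suitable parameter ranges. So there is no paper proof to match, and your text cannot be accepted as a proof either, because its decisive step is never carried out. Your opening reduction is fine: word-representability is hereditary, and since adjacency in $S(n,k)$ depends only on the length-$(n-1)$ overlap condition, $S(n,3)$ is the induced subgraph of $S(n,k)$ on $A^n_0(3)$, so it suffices to treat $k=3$; this is exactly the reduction the paper itself records after stating the conjecture (``if $S_m(n,k')$ is non-word-representable for some $m,n,k'$ then \dots $S(n,k)$ \dots for every $k\geq k'$'', cf.\ Lemma~\ref{obs}(i)). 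But everything after that is conditional: ``I would \dots write down an explicit family of six (or seven) words'' is a research plan, not a construction. No witness words are exhibited for any $n\geq 4$, no adjacency verification is performed, and you yourself identify the uniform-in-$n$ verification and the absence of monotonicity in $n$ as the main obstacles without resolving them. A proof whose central object is hypothetical proves nothing.

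Two further substantive concerns. First, it is not established that $W_5$ (or any fixed $7$-vertex non-word-representable graph) occurs as an induced subgraph of $S(n,3)$ at all; your own observation about the hub's neighborhood splitting into left-shift and right-shift images suggests it may not, and if no finite forbidden witness embeds, the entire strategy collapses --- non-word-representability of $S(n,3)$ would then have to be argued globally (e.g., by ruling out semi-transitive orientations directly, as the paper's Appendix does for $S_1(2,4)$ via an extensive case analysis, which illustrates how nontrivial even a single small instance is). Second, your remark that ``the base case $n=3$ is Theorem~\ref{S0n2-thm}(iii)'' is a red herring in your own framework: as you concede, there is no adjacency-preserving induced embedding $S(3,3)\hookrightarrow S(n,3)$, so that case contributes nothing to $n\geq 4$. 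In short: the reduction in $k$ is correct and matches the paper, but the statement remains a conjecture both in the paper and after your attempt.
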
 
\noindent Towards settling this conjecture it is natural to consider induced subgraphs of $S(n,k)$, such as $S_m(n,k)$, that have simpler structure but may still be non-word-representable. So, if $S_m(n,k')$ is non-word-representable for some $m, n, k'$ then $S_m(n,k)$ and hence $S(n,k)$, are non-word-representable for every $k\geq k'$.  

In order to study word-representability of $S_m(n,k)$, we use an {\em embedding technique} based on the well-known notion of  graph homomorphisms (see, e.g.~\cite{HN}). Let $G$ and $H$ be two graphs. Assume that there exists a mapping $f$ from $V(G)$ to $V(H)$ preserving adjacency (i.e.\ if $uv\in E(G)$ then $f(u)f(v)\in E(H)$). Then such a mapping $f$ is called a {\em homomorphism} (or just an {\em embedding}) from $G$ to $H$. For instance, for a $k$-colorable graph $G$, its $k$-coloring induces a homomorphism from $G$ to $K_k$ (the complete graph of order $k$). 

We found out that  homomorphisms can be useful for finding potential semi-transitive orientations of $G$. 
To the best of our knowledge, this is the first use of the homomorphisms in the area of word-representable graphs (although the proof of Theorem~\ref{3-col-thm} in \cite{Halldorsson} may be considered as a simplest application of this technique). Informally, the basic steps of the embedding technique (that will be indicated explicitly in the proofs of Theorem~\ref{thmspn} and Lemmas~\ref{lemo2n}--\ref{lemma-final}) are as follows: \\[-3mm]

\noindent
{\bf Step 1.} Given a graph $G$, find a word-representable graph $H$ such that there exists a homomorphism $f$ from $G$ to $H$. Based on a fixed acyclic orientation of $H$, orient all edges in $G$ as in $H$ (i.e.\ $u\rightarrow v$ in $G$ if $f(u)\rightarrow f(v)$ in the oriented $H$). Clearly, we obtain an acyclic orientation of $G$.\\[-3mm]

\noindent
{\bf Step 2.} Find all directed paths $f(v_0)\rightarrow f(v_1)\rightarrow \cdots \rightarrow f(v_k)$, $k\geq 3$, with $f(v_0) \rightarrow f(v_k)$ in the oriented $H$. We call such paths {\it shortcutting paths}. 
If no shortcutting paths exist in $H$, then the orientation of $G$ is semi-transitive, and we terminate the procedure. 
For instance, Theorem~\ref{3-col-thm} can be proved by embedding a 3-colorable graph into a triangle and orienting the triangle transitively. \\[-3mm]

\noindent
{\bf Step 3.} Analyze each shortcutting path found in Step 2. If none of them induces a shortcut in $G$  then we obtain a desired semi-transitive orientation of $G$.\\[-3mm]

\noindent
{\bf Remark.} We can find all directed paths in Step 2, for instance, in the following way. Let $M$ be the adjacency matrix of $H$, i.e.\  $a_{ij}=1$ if there is an arc from $v_i$ to $v_j$ in $H$ and $a_{ij}=0$ otherwise. Then $H$ contains a shortcutting path of length $p$  from $v_i$ to $v_j$ with the shortcutting edge $v_iv_j$ if and only if both matrices $M$ and its $p$-th power $M^p$ have positive elements in position $(i,j)$. 

Of course, Step~3 is not always applicable since there can be a homomorphism from a non-word-representable graph to a word-representable graph (for instance, a homomorphism from $W_5$ to $K_4$). However, sometimes the approach works providing elegant proofs of word-representability for quite complicated graphs. 

As an application of the embedding technique, we will prove the following two theorems.

\begin{thm}\label{thmspn}
	For any positive integer $n$, $SP(n)$ is word-representable.
\end{thm}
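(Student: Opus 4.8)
The plan is to apply the embedding technique of Steps~1--3 directly to $G=SP(n)$, so that by Theorem~\ref{semi-trans-thm} it suffices to produce a semi-transitive orientation. The first task is to make the adjacency of $SP(n)$ explicit and workable. Writing $\mathrm{st}(w)$ for the standardisation (order pattern) of a word $w$ with distinct entries, the defining condition of $P(n)$ says precisely that there is an arc $x_1\cdots x_n\to y_1\cdots y_n$ exactly when $\mathrm{st}(x_2\cdots x_n)=\mathrm{st}(y_1\cdots y_{n-1})$; thus $SP(n)$ is a de Bruijn-type overlap graph on $S_n$, and two permutations are adjacent iff one can be obtained from the other by the shift-and-overlap rule in at least one direction. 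I would first record, once and for all, this overlap description together with the small but important fact that adjacent permutations may share many natural statistics (for instance, $213$ and $231$ are adjacent in $SP(3)$ and share their first entry), which rules out the most naive colourings and orderings.

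Step~1 (choice of host). Because $P(n)$ itself contains directed cycles---already the loops at the increasing and decreasing permutations, and the two-cycles coming from bidirectional edges---its orientation cannot be reused, and no single linear statistic such as $\mathrm{inv}$ is monotone along all arcs. Instead I would build a homomorphism $f$ from $SP(n)$ into a word-representable host graph $H$ carrying a fixed acyclic orientation, and pull this orientation back to $SP(n)$ by setting $u\to v$ whenever $f(u)\to f(v)$ in $H$; this is automatically acyclic. The natural first candidate for $f$ records the first entry $x_1$, ordering the classes $\{x_1=1\},\ldots,\{x_1=n\}$ transitively as in $K_n$; since adjacent permutations may share a first entry, I would refine $f$ lexicographically so that permutations sharing a first entry are separated while the class structure and its transitive order are preserved, arranging that no edge of $SP(n)$ is mapped to a loop. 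The point of using the coarse first-entry layering, rather than an arbitrary linear order, is to keep $H$ structured enough that its shortcutting paths can be enumerated.

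Steps~2 and 3 (shortcut analysis). Using the adjacency-matrix criterion from the Remark, I would list every shortcutting path of $H$, i.e.\ every pair $(a,b)$ with both an arc $a\to b$ and a directed path of length $\ge 3$ from $a$ to $b$. For each such path $f(v_0)\to\cdots\to f(v_k)$ with the chord $f(v_0)\to f(v_k)$, I would use the explicit overlap description to show that, in $SP(n)$, all the forward arcs $v_i\to v_j$ are in fact present whenever $v_0\to v_k$ is; consequently the induced sub-orientation is transitive and is not a shortcut. Carrying this out for every shortcutting path yields that the pulled-back orientation is shortcut-free, hence semi-transitive, and $SP(n)$ is word-representable by Theorem~\ref{semi-trans-thm}. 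Where the layering already forbids long directed paths carrying a chord, Step~2 terminates with no shortcutting paths at all and no case analysis is needed; this is the ideal outcome I would aim the construction of $f$ towards, in the spirit of the embedding of a $3$-colourable graph into $K_3$ behind Theorem~\ref{3-col-thm}.

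I expect the main obstacle to be Step~3 together with the correct design of $f$ in Step~1. The warning in the text that there is a homomorphism $W_5\to K_4$ into a word-representable graph shows that the existence of $f$ carries no information on its own: all of the content lies in verifying that no shortcutting path of $H$ lifts to a genuine shortcut of $SP(n)$. The delicate combinatorial heart is therefore to choose the host $H$, and its acyclic orientation, so that the overlap structure of permutations forces exactly the chord arcs that transitivity demands; if a chosen $H$ produces a shortcutting path whose lift is a true shortcut, one must either refine $f$ or locally re-orient, and controlling this interaction uniformly in $n$ is the step I would expect to require the most care.
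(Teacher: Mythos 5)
There is a genuine gap: you never actually produce a working host graph, and the one concrete candidate you do sketch fails already for $n=3$. Ordering the first-entry classes $\{x_1=1\},\ldots,\{x_1=n\}$ transitively and then ``refining lexicographically'' within classes (to avoid mapping the edge $213$--$231$ to a loop) amounts to pulling back the full lexicographic order, i.e.\ orienting every edge of $SP(n)$ from the lexicographically smaller endpoint to the larger one. In $SP(3)$ this orientation contains the directed path $123\rightarrow 132\rightarrow 213\rightarrow 231$ together with the arcs $123\rightarrow 213$ and $123\rightarrow 231$ (all five are edges of $SP(3)$, as in Figure~\ref{graph-ex-2}), while $132$ and $231$ are \emph{not} adjacent; the induced subgraph on $\{123,132,213,231\}$ is acyclic, non-transitive, and has the shortcutting edge $123\rightarrow 231$, so it is a shortcut. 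Thus the orientation your Step~1 yields is not semi-transitive even in the smallest nontrivial case, and your Step~3 --- which you yourself flag as the unresolved ``delicate combinatorial heart,'' to be patched by re-refining $f$ or locally re-orienting --- is left entirely open. A plan of the form ``choose some $f$, enumerate shortcutting paths, fix problems as they appear'' is not a proof; the entire difficulty is the choice of $f$ and $H$, and that choice is missing.

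The idea you are missing is the paper's host: map each permutation to its descent word, $\tau(x_1\cdots x_n)=y_1\cdots y_{n-1}$ with $y_i=0$ if $x_i>x_{i+1}$ and $y_i=1$ otherwise. The overlap rule you correctly identified (via standardisation) shows $\tau(\omega')$ is the one-letter shift of $\tau(\omega)$, so $\tau$ is a homomorphism from $SP(n)$ into $S_0(n-1,2)$; the only way $\tau(\omega)=\tau(\omega')$ is a constant word, forcing $\omega=\omega'$ to be the increasing or decreasing permutation, so no edge maps to a loop. Then one orients $S_0(n-1,2)$ using Petyuk's $3$-coloring $R\cup B\cup G$, making every $R$-vertex a source and every $G$-vertex a sink. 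Every directed path in this oriented host has length at most $2$, so there are no shortcutting paths at all: the pulled-back orientation of $SP(n)$ is semi-transitive with no case analysis whatsoever, which is exactly the ``ideal outcome'' you hoped for but did not achieve. Note also that your coarse invariant points in the wrong direction: the first entry is \emph{not} preserved or shifted by the overlap relation in any useful way, whereas the descent word is precisely the statistic that the de Bruijn-type overlap structure transports, which is why $\tau$ works.
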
 

Theorem~\ref{thmspn} is proved in Section~\ref{SP(n)-sec}.
In our proof of Theorem~\ref{thmspn}, the oriented $H$ found in Step~1 contains no shortcutting paths.

\begin{thm}\label{thmsnk}
	Let $m,n,k$ be positive integers.
	\begin{wst}
		\item[\rm (i)] For $n=2,3,4$ and $k\leq (n+1)m$, $S_m(n,k)$ is word-representable. Moreover, $S_m(2,k)$ is non-word-representable for $k\geq 3m+1$.
		\item[\rm (ii)]	For $n\geq 5$ and $k\leq 2mn$, $S_m(n,k)$ is word-representable.
	\end{wst}
\end{thm}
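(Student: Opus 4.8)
The plan is to handle the two word-representable ranges through a single \emph{block-coarsening} homomorphism and to treat the non-word-representability of $S_m(2,k)$ by a hereditary embedding argument. The observation underlying everything is that, partitioning $A(k)$ into consecutive blocks $B_j=\{jm,jm+1,\dots,(j+1)m-1\}$ of length $m$ (so $0\le j\le t-1$ with $t=\lceil k/m\rceil$), any two letters in the same block differ by less than $m$; hence in every vertex $x_1x_2\cdots x_n$ of $S_m(n,k)$ consecutive letters lie in distinct blocks. Writing $\beta(x)=\lfloor x/m\rfloor$, I would first verify that $f(x_1\cdots x_n)=\beta(x_1)\cdots\beta(x_n)$ is a homomorphism from $S_m(n,k)$ to $S_1(n,t)$: the image has distinct consecutive entries, the de~Bruijn overlap defining an edge is preserved by $\beta$, and a word and its nontrivial shift cannot have equal $\beta$-images because consecutive block indices differ. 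Since $k\le (n+1)m$ gives $t\le n+1$ and $k\le 2mn$ gives $t\le 2n$, this reduces both claims of (i)--(ii) to choosing an acyclic orientation of the small base graph $H=S_1(n,t)$ whose pullback along $f$ is semi-transitive on $S_m(n,k)$.

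I would then run Steps~1--3 of the embedding technique. For Step~1, when $t$ is small enough one can bypass the orientation analysis entirely: for $n=2$ we have $t\le 3$, colouring by the first block is a proper $3$-colouring of $S_1(2,3)$, and Theorem~\ref{3-col-thm} applies directly. In general I would fix an acyclic orientation of $H$ (for example the one induced by a linear order on the block words), aiming as in the proof of Theorem~\ref{thmspn} for an orientation of $H$ with \emph{no} shortcutting paths, and then enumerate the shortcutting paths of $H$ via the adjacency-matrix criterion of the Remark. The key remaining point, because $f$ is far from injective, is that a directed path $u_0\to\cdots\to u_p$ of $S_m(n,k)$ maps only to a directed \emph{walk} in $H$; if one shows that along such a path the block words $f(u_i)$ stay pairwise distinct, the walk is an honest path, and then the absence of shortcutting paths in $H$ (or a case check that the finitely many such paths do not lift) forces the orientation of $S_m(n,k)$ to be shortcut-free, hence semi-transitive.

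For the non-word-representability of $S_m(2,k)$ with $k\ge 3m+1$, I would isolate the single base case $S_1(2,4)$ — the graph on the $12$ ordered pairs over $\{0,1,2,3\}$ with distinct entries, adjacency given by head-to-tail overlap — and verify directly, via Theorem~\ref{semi-trans-thm}, that it admits no semi-transitive orientation, hence is non-word-representable. One cannot shortcut this by locating an induced $W_5$: in $S_m(2,k)$ the neighbourhood of any vertex $(a,b)$ consists of the out-pairs from $b$ and the in-pairs to $a$, which form a partial matching together with the single isolated vertex $(b,a)$, so no neighbourhood contains an induced $C_5$. I would then observe that the dilation $i\mapsto im$ sends $\{0,1,2,3\}$ to $\{0,m,2m,3m\}$ with all pairwise gaps multiples of $m$, embedding $S_1(2,4)$ as an \emph{induced} subgraph of $S_m(2,3m+1)$ (both the validity condition $i\ne j$ and the overlap condition are preserved), while $S_m(2,3m+1)$ is induced in $S_m(2,k)$ for every $k\ge 3m+1$. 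Since an induced subgraph of a word-representable graph is word-representable, non-word-representability of the base case propagates to all these graphs.

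The main obstacle I anticipate is Step~3 in the positive direction: controlling the non-injectivity of $f$ tightly enough to guarantee that collapsing distinct words of $S_m(n,k)$ onto a common block word never creates a chord that turns a path into a shortcut, and doing this uniformly over the substantially wider range $t\le 2n$ permitted when $n\ge 5$. This is also presumably what separates the two regimes: the simpler orientation analysis available for $n\le 4$ reaches only $t\le n+1$, whereas exploiting the longer overlap for $n\ge 5$ is what lets the argument survive up to $t\le 2n$. Matching these thresholds to the largest $t$ for which $S_1(n,t)$ still carries a pullback-stable shortcut-free orientation — sharp at $t=3$ for $n=2$, as the negative result certifies — is where the real combinatorial work lies.
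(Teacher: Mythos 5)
Your block-coarsening map $\beta(x)=\lfloor x/m\rfloor$ is a correct homomorphism from $S_m(n,k)$ to $S_1(n,t)$ with $t=\lceil k/m\rceil$ (consecutive letters of a vertex differ by at least $m$, hence land in distinct blocks, and equal $\beta$-images of overlapping words would force all block indices equal), and it is a nice companion to the paper's Lemma~\ref{obs}(ii), which goes in the opposite direction. For $n=2$ your argument is in fact complete and genuinely different from the paper's: composing $\beta$ with the proper first-letter colouring of $S_1(2,3)$ gives a $3$-colouring of $S_m(2,k)$ for $k\le 3m$, so Theorem~\ref{3-col-thm} applies, whereas the paper orients $S_m(2,k)$ lexicographically and checks that no directed path of length $3$ exists. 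Likewise your dilation $i\mapsto im$ propagating non-word-representability from $S_1(2,4)$ to $S_m(2,k)$ for $k\ge 3m+1$ is exactly the paper's use of Lemma~\ref{obs}, including the needed ``induced subgraph'' strength.

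Beyond $n=2$, however, there are two genuine gaps. First, for all of $n\ge 3$ you reduce the theorem to finding an acyclic orientation of $H=S_1(n,t)$ whose pullback along $f$ is shortcut-free, and stop there; but this reduction only normalizes $m$ to $1$ and leaves a problem essentially as hard as the original. Indeed, by Lemma~\ref{obs}(ii), $S_1(n,t)$ sits as an induced subgraph inside $S_m(n,(t-1)m+1)$, hence inside $S_m(n,k)$ at the relevant extremes, so in particular your target problem for $t\le 2n$, $n\ge 5$, \emph{is} the $m=1$ case of the theorem --- the plan is circular without new input. The paper's real content lies precisely in what you defer: bipartiteness of the monotone subgraphs $S^{<}_m(n,k)$ for $k\le 2mn$ (Lemmas~\ref{lem2<} and~\ref{lem<}, via an inductive cluster-graph argument), the descent-pattern map $\tau$ into $S_0(n-1,2)$ with its explicit $3$-colouring, and hand-built $5$- and $10$-vertex targets $H$ whose few shortcutting paths are shown not to lift (Lemmas~\ref{lemo2n}--\ref{lemma-final}); note that these orientations of $H$ \emph{do} contain shortcutting paths --- in Lemma~\ref{leme2n} even genuine shortcuts --- so your hope of an orientation with no shortcutting paths, as in Theorem~\ref{thmspn}, cannot succeed here, and your ``walk versus path'' concern is moot since in an acyclically oriented $H$ every directed walk is automatically a path. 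Second, the crux of the negative statement --- that $S_1(2,4)$ admits no semi-transitive orientation --- is merely asserted (``verify directly''); this finite check is the paper's entire Appendix, a ten-line branching case analysis relying on two lemmas from \cite{KitaevSun}, and your (correct) observation that no vertex neighbourhood contains an induced $C_5$, so no induced $W_5$ shortcut is available, underscores rather than removes the need for that verification.
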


Theorem~\ref{thmsnk} is proved  in Section~\ref{Smnk-sec}  and in Appendix.

 In general, it is desirable that an orientation of $H$ fixed in {\bf Step~1} is semi-transitive, which is the case in the proofs of Theorem~\ref{thmspn} (mentioned above) and Lemmas~\ref{lemo2n}, \ref{lemma-6} and~\ref{lemma-final} (related to Theorem~\ref{thmsnk}(ii)). However, this is not a necessary condition as is demonstrated by us in the proof of Lemma~\ref{leme2n} in Section~\ref{even-Sec} (related to Theorem~\ref{thmsnk}(ii)). 

\section{Preliminaries}\label{Pre-sec}

In this section we provide simple (known) statements that are required for deriving our further results. The following lemma reveals relations among $S_m(n,k)$ for various $m$ and $k$.

\begin{lem} \label{obs}
	For any positive integers $m,n,k$ with $k\geq 2$, we have
	\begin{wst}
		\item[\rm (i)]$S_m(n,k)$ is isomorphic to a subgraph of $S_m(n,k+1)$;
		\item[\rm (ii)]$S_1(n,k)$ is isomorphic to a subgraph of $S_{m}(n, (k-1) m+1)$.
	\end{wst}
\end{lem}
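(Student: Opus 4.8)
The plan is to prove each part by exhibiting a single explicit letterwise map and checking that it is an injective homomorphism, i.e.\ an isomorphism onto its image. Throughout I would use the following reformulation of adjacency: two distinct words $u=x_1x_2\cdots x_n$ and $v=y_1y_2\cdots y_n$ are adjacent in $S_m(n,k)$ exactly when one is obtained from the other by a de Bruijn shift, that is, when $x_{i+1}=y_i$ for all $i\in\{1,\ldots,n-1\}$, or the same with $u$ and $v$ interchanged. The crucial point making both parts easy is that this overlap condition involves only the letters of $u$ and $v$ and is entirely independent of the alphabet size $k$.

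For part (i) I would simply take the identity map on $A^n_m(k)$. First I would observe that $A^n_m(k)\subseteq A^n_m(k+1)$: since $A(k)\subseteq A(k+1)$, every word over $A(k)$ whose consecutive letters differ by at least $m$ remains such a word over the larger alphabet. Because the overlap condition does not see $k$, two words of $A^n_m(k)$ are adjacent in $S_m(n,k)$ if and only if they are adjacent in $S_m(n,k+1)$. Hence the identity embeds $S_m(n,k)$ as an induced subgraph of $S_m(n,k+1)$, which is in fact slightly stronger than the stated conclusion.

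For part (ii) the idea is to \emph{stretch} the letters so that a gap of at least $1$ becomes a gap of at least $m$. Writing $k'=(k-1)m+1$, I would define $\phi\colon A^n_1(k)\to A^n_m(k')$ by applying the scaling $a\mapsto ma$ to each letter, i.e.\ $\phi(x_1x_2\cdots x_n)=(mx_1)(mx_2)\cdots(mx_n)$. The steps are: (a) verify that the image lands in the correct vertex set, as the largest letter produced is $(k-1)m$, which lies in $A(k')=\{0,1,\ldots,(k-1)m\}$, and $|x_i-x_{i+1}|\geq 1$ gives $|mx_i-mx_{i+1}|=m\,|x_i-x_{i+1}|\geq m$, so $\phi(w)\in A^n_m(k')$; (b) note that $\phi$ is injective, since $a\mapsto ma$ is injective on letters; and (c) check that the de Bruijn overlap is preserved, which again follows from the injectivity of $a\mapsto ma$, so $\phi(u)$ and $\phi(v)$ overlap precisely when $u$ and $v$ do. Consequently $\phi$ is injective and edge-preserving, hence realizes $S_1(n,k)$ as a subgraph of $S_m(n,k')$, as required.

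I do not expect any genuine obstacle here, as all the verifications are routine. The only place asking for a little care is the bookkeeping in part (ii): one must confirm membership in the vertex set and the gap condition simultaneously, and check that the scaling factor $m$ is matched to the target alphabet size so that the largest stretched letter $(k-1)m$ just fits into $A(k')$. This is exactly what forces the value $(k-1)m+1$ appearing in the statement.
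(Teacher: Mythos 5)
Your proposal is correct and matches the paper's own argument: the paper also treats (i) as immediate (the identity inclusion) and proves (ii) with exactly the same letterwise scaling $f(\omega)=(mx_1)(mx_2)\ldots(mx_n)$, checking membership in the target vertex set and that adjacency is preserved in both directions. Your write-up just makes explicit a few routine verifications (injectivity, the overlap reformulation of adjacency) that the paper leaves implicit.
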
 
\begin{proof}
	It is clear that (i) holds. To prove (ii) consider the following embedding 
of $S_1(n,k)$ into $S_m(n,(k-1)m+1)$. For any vertex $\omega=x_1x_2\ldots x_n$ in $S_1(n,k)$, let $f(\omega)$ be the word of length $n$ obtained by multiplying each letter of $\omega$ by $m$, that is,
	$f(\omega)=(m x_1)(m x_2)\ldots(m x_n).$ Since $x_i\leq k-1$, $f(\omega)$ is a vertex in $S_m(n,(k-1)m+1)$. Also, $\omega\omega'\in E(S_1(n,k))$ if and only if $f(\omega)f(\omega')\in E(S_m(n,(k-1)m+1))$, which implies that (ii) holds.
\end{proof}

We also need a proof of $3$-colorability of $S_0(n,2)$ that was found by Petyuk in \cite{Petyuk}. Here we rewrite the original proof in a more convenient notation that will be of use for us later on. \\[-3mm]

\noindent
{\bf A $3$-colorability of $S_0(n,2)$.} We first introduce the following types of words:
	\begin{wst}
		\item[\footnotesize $\bullet$] $e_1$ (resp., $e_0)$ represents words consisting of a positive {\em even} number of 1s (resp., 0s) only; 
		\item[\footnotesize $\bullet$] $o_1$ (resp., $o_0)$ represents words  consisting of an {\em odd} number of 1s (resp., 0s) only;
		\item[\footnotesize $\bullet$] $a_1$ (resp., $a_0)$ represents {\em non-empty} words {\em beginning} with 1 (resp., 0);
		\item[\footnotesize $\bullet$] $b_1$ (resp., $b_0)$ represents either words {\em beginning} with 1 (resp., 0) or an {\em empty} word.
	\end{wst}

Then the vertices in $S_0(n,2)$ may be represented (in different ways) using this notation. The following observation is easy to verify.

\begin{Obs}\label{obs2}
	Let $w$ be a word representing a vertex in $S_0(n,2)$ and let the word $w'$ be obtained from $w$ by removing the first letter. Then
\begin{wst}
		\item[\rm (i)] if $w$ has the form $e_1b_0$ (resp., $e_0b_1$) then $w'$ has the form $o_1b_0$ (resp., $o_0b_1$);
		\item[\rm (ii)] if $w$ has the form $o_1b_0$ (resp., $o_0b_1$) then $w'$ has either the form $e_1b_0$ (resp., $e_0b_1$) or the form $b_0$ (resp., $b_1$).
	\end{wst}
\end{Obs}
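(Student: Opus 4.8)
The statement to prove is Observation~\ref{obs2}, which concerns how the type of a word in $S_0(n,2)$ changes when its first letter is removed. Let me plan the proof.

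\medskip

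\noindent
The plan is to verify each case directly by analyzing the structure of the word $w$ according to its type label, then reading off the type of $w'$ after deleting the leading letter. The notation introduced just before the statement encodes two pieces of information: the concatenation structure (e.g.\ $e_1b_0$ means a block of an even, positive number of $1$s followed by a block that either starts with $0$ or is empty) and the parity/nonemptiness constraints on each block. Since every claim is about a concatenation of a \emph{monochromatic leading block} with a \emph{tail block}, removing the first letter only affects the leading block, and the proof should reduce to tracking what happens to that block.

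\medskip

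\noindent
First I would handle part (i). Suppose $w$ has the form $e_1b_0$, so $w = \underbrace{1\cdots1}_{2t}\,u$ where $t\geq 1$ and $u$ is either empty or begins with $0$. Deleting the first letter leaves $\underbrace{1\cdots1}_{2t-1}\,u$; since $2t-1$ is odd and positive, the leading block is now of type $o_1$, and the tail $u$ is unchanged and still of type $b_0$. Hence $w'$ has the form $o_1b_0$, as claimed. The case $e_0b_1$ is identical after swapping the roles of $0$ and $1$. The only point requiring a word of care is the edge case: when $w$ consists of exactly two $1$s (so $u$ is empty), $w' = 1$, which is indeed of the form $o_1b_0$ with an empty $b_0$ block — this is why the definitions permit $b_0$, $b_1$ to denote empty words.

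\medskip

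\noindent
Next I would handle part (ii). Suppose $w$ has the form $o_1b_0$, so $w = \underbrace{1\cdots1}_{2t-1}\,u$ with $t\geq 1$ and $u$ empty or beginning with $0$. There are two subcases depending on the length of the leading block. If $2t-1\geq 3$, deleting the first letter yields $\underbrace{1\cdots1}_{2t-2}\,u$ with $2t-2$ even and positive, so $w'$ is of type $e_1b_0$. If instead $2t-1 = 1$, then $w = 1u$ and deleting the leading letter yields $w' = u$, which is either empty or begins with $0$, i.e.\ of the form $b_0$. This accounts for exactly the two forms allowed in the statement. Again the $e_0b_1$/$b_1$ case is symmetric. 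The proof is entirely elementary; the main thing to be careful about is the bookkeeping of parity and the empty-block edge cases, ensuring that the convention allowing $b_0$ and $b_1$ to be empty is used consistently so that short words such as $1$ or $11$ are correctly classified.
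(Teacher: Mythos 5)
Your proof is correct and is precisely the direct verification the paper has in mind: the paper offers no written proof of Observation~\ref{obs2}, simply asserting it is ``easy to verify,'' and your case analysis (tracking the parity of the maximal leading run of $1$s, resp.\ $0$s, and splitting part (ii) according to whether that run has length $1$ or at least $3$) is the intended argument. Your attention to the edge cases where the $b_0$/$b_1$ block is empty is exactly the point of the convention distinguishing $a_i$ from $b_i$, so nothing is missing.
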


Consider two cases.

 {\noindent \bf Case 1.} $n$ is even. In this case, each vertex of $S_0(n,2)$ can be represented by exactly one of $\{e_0b_1,e_1b_0,o_1e_0a_1,o_1o_0b_1,o_0e_1a_0,o_0o_1b_0\}$, which is called the {\em form} of this vertex. Then we can color the vertices in $S_0(n,2)$ based on their forms as follows:
    \begin{align*}
    	\text{\bf Red: } e_0b_1,e_1b_0; &&\text{\bf Blue: } o_1e_0a_1, o_1o_0b_1; &&
    	\text{\bf Green: } o_0e_1a_0, o_0o_1b_0.
    \end{align*}
  
    {\noindent \bf Case 2.} $n$ is odd. Similarly, we color the vertices in $S_0(n,2)$ based on their forms as follows:
    \begin{align*}
    	\text{\bf Red: } e_0a_1, e_1a_0; &&\text{\bf Blue: } o_1, o_1o_0a_1, o_1e_0b_1; &&
    	\text{\bf Green: } o_0, o_0o_1a_0, o_0e_1b_0.
    \end{align*}

Using Observation~\ref{obs2}, it is not difficult to verify that in both cases there are no monochromatic edges (note that the vertices of the forms $e_1, e_0, o_1,$ and $o_0$ are unique and $S_0(n,2)$ has no loops), and thus we obtain a proper 3-coloring of $S_0(n,2)$.

\section{Word-representability of $SP(n)$}\label{SP(n)-sec}

   In this section, we give a proof of Theorem~\ref{thmspn} using the embedding technique. Note that $SP(1)$ is a single vertex while $SP(2)$ is a single edge. Both of these graphs are word-representable. Next, we consider the case of $n\geq 3$.

{\bf Step 1.}  Let $H=S_0(n-1,2)$. For any vertex $\omega=x_1x_2\ldots x_n$ in $SP(n)$, let $\tau(\omega)=y_1\ldots y_{n-1}$ be such that $y_i=0$ if $x_i>x_{i+1}$ and $y_i=1$ otherwise. Then $\tau(w)$ is a mapping from $V(SP(n))$ to $V(S_0(n-1,2))$. Let us show that $\tau$ is a homomorphism. 	
   Assume that $\omega\omega'$ is an edge in $E(SP(n))$, where $\omega=x_1x_2\ldots x_n$, $\omega'=x'_1x'_2\ldots x'_n$ and $\omega\rightarrow \omega'$ is an arc in $P(n)$. 

   If $\tau(\omega)=y_1y_2\ldots y_{n-1}$, then $\tau(\omega')=y_2\ldots y_{n-1}y_n$ by definition of $\tau$. Thus, if $\tau(\omega)\neq \tau(\omega')$, then there exists an edge between $\tau(\omega)$ and $\tau(\omega')$. However, $\tau(\omega)=\tau(\omega')$ implies that $y_1=\cdots=y_n\in \{0,1\}$ so that $\omega=\omega'\in \{123\ldots n,\ n(n-1)(n-2)\ldots 1\}$, which is a contradiction with $\omega\neq\omega'$. So $\tau(\omega)$ is adjacent to $\tau(\omega')$ in $S_0(n-1,2)$, as desired.
	
Let $R\cup B\cup G$ be a 3-coloring partition of $V(S_0(n-1,2))$. Orient $S_0(n-1,2)$ so that any vertex in $R$ is a source and any vertex in $G$ is a sink. 

{\bf Step 2. }Orient all edges in $SP(n)$ as in $S_0(n-1,2)$ (i.e., $u\rightarrow v$ in $SP(n)$ if $\tau(u)\rightarrow \tau(v)$ in the oriented $S_0(n-1,2)$).
   	
Since there are no shortcutting paths in the oriented $S_0(n-1,2)$, we obtain a desired semi-transitive orientation of $SP(n)$ ({\bf Step~3} is not needed). By Theorem~\ref{semi-trans-thm}, $SP(n)$ is word-representable.
   
\section{Word-representability of $S_m(n,k)$}\label{Smnk-sec}

\subsection{The case of $n=2$}

\begin{lem}
	If $m\geq 1$ and $k\leq 3m$ then $S_m(2,k)$ is word-representable. 
\end{lem}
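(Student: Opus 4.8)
The plan is to apply the embedding technique of the paper with target $H=S_1(2,3)$, collapsing the $k$-letter alphabet onto three blocks of length $m$. Since $k\le 3m$, every letter $x\in\{0,1,\ldots,k-1\}$ lies in exactly one of the intervals $\{0,\ldots,m-1\}$, $\{m,\ldots,2m-1\}$, $\{2m,\ldots,3m-1\}$, so I would set $\phi(x)=\lfloor x/m\rfloor\in\{0,1,2\}$ and define $f(x_1x_2)=\phi(x_1)\phi(x_2)$. The first point to check is that $f$ maps $V(S_m(2,k))$ into $V(S_1(2,3))$: a vertex $x_1x_2$ of $S_m(2,k)$ satisfies $|x_1-x_2|\ge m$, which forces $x_1$ and $x_2$ into distinct blocks, so $\phi(x_1)\ne\phi(x_2)$ and $\phi(x_1)\phi(x_2)$ is a legitimate vertex of $S_1(2,3)$. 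This use of $|x_1-x_2|\ge m$ is exactly what lets us avoid the diagonal vertices $ii$, which is crucial because $S_0(2,3)$ is non-word-representable by Theorem~\ref{S0n2-thm}(ii) while $S_1(2,3)$ will be word-representable.

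Next I would verify that $f$ is a homomorphism (\textbf{Step~1}). For $n=2$ the de Bruijn adjacency reads $x_1x_2\sim y_1y_2$ in $S_m(2,k)$ if and only if $x_2=y_1$ or $x_1=y_2$. In the first case $f(x_1x_2)=\phi(x_1)\phi(x_2)$ and $f(y_1y_2)=\phi(x_2)\phi(y_2)$ again satisfy the de Bruijn relation and are distinct (as $\phi(x_1)\ne\phi(x_2)$), so they are adjacent in $S_1(2,3)$; the case $x_1=y_2$ is symmetric. Thus $f$ preserves adjacency and never collapses an edge to a loop.

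The crux is to show that $H=S_1(2,3)$ is word-representable. Here I would describe $H$ explicitly: its six vertices form the two triangles $\{01,12,20\}$ and $\{02,10,21\}$, joined by the matching $01\text{-}10$, $20\text{-}02$, $12\text{-}21$, so $H$ is the triangular prism and hence $3$-colorable. By Theorem~\ref{3-col-thm} it is word-representable, and, mimicking the proof of Theorem~\ref{thmspn}, I would orient $H$ from a proper $3$-coloring by making one colour class consist of sources and another of sinks. Every directed path in this orientation has length at most $2$, so there are no shortcutting paths (\textbf{Step~2}) and \textbf{Step~3} is vacuous.

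Transporting the orientation to $S_m(2,k)$ through $f$ (orient $u\to v$ whenever $f(u)\to f(v)$) gives an acyclic orientation in which any directed path maps to a directed walk of length at most $2$ in $H$; hence $S_m(2,k)$ has no directed path of length $\ge 3$ and therefore no shortcut, so the orientation is semi-transitive and $S_m(2,k)$ is word-representable by Theorem~\ref{semi-trans-thm}. I expect the only genuine work to be the two verifications flagged above — that $f$ lands in $S_1(2,3)$ and respects adjacency, and that $S_1(2,3)$ is the prism — after which the conclusion follows from the standard source/sink argument.
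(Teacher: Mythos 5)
Your proposal is correct, but it takes a genuinely different route from the paper, which for this lemma does not use the embedding technique at all: the paper simply orients every edge of $S_m(2,k)$ by lexicographic order and observes that along any arc the first letter increases by at least $m$ (in either case $x_2=y_1$ or $x_1=y_2$), so $k\leq 3m$ forbids directed paths of length $3$ and hence shortcuts. Your verifications all check out: $|x_1-x_2|\geq m$ does force $\lfloor x_1/m\rfloor\neq\lfloor x_2/m\rfloor$ since letters in a common block differ by at most $m-1$; overlapping vertices map to overlapping, distinct vertices of $S_1(2,3)$ (distinctness again from $\phi(x_1)\neq\phi(x_2)$), so no edge collapses to a loop; and $S_1(2,3)$ is indeed the triangular prism with triangles $\{01,12,20\}$ and $\{02,10,21\}$ joined by the matching you list, hence $3$-chromatic. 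Your block-collapse $\phi(x)=\lfloor x/m\rfloor$ is in effect a one-sided inverse of the paper's Lemma~\ref{obs}(ii) embedding $x_1x_2\mapsto (mx_1)(mx_2)$, which is a pleasant structural complement. As for what each approach buys: the paper's lexicographic argument is shorter and self-contained, while yours demonstrates the embedding technique in the one case where the paper avoided it, and it actually proves something stronger --- composing $f$ with the prism's $3$-coloring shows that $S_m(2,k)$ is $3$-colorable whenever $k\leq 3m$, so you could have invoked Theorem~\ref{3-col-thm} directly and skipped the source/sink orientation and the pull-back argument entirely (your final paragraph essentially re-proves that theorem, relying implicitly on the fact that a directed walk in an acyclic digraph is a path, which is what caps path lengths at $2$). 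Both proofs terminate for the same underlying reason, the absence of directed paths of length $3$; yours localizes this in the target graph $H$, whereas the paper tracks the growth of the first letter in $S_m(2,k)$ itself.
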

\begin{proof}
For all pairs of adjacent vertices $x_1x_2$ and $y_1y_2$ in $S_m(2,k)$, orient the edge by the lexicographical order (i.e.\  we have $x_1x_2\rightarrow y_1y_2$ if $x_1x_2$ is lexicographically smaller than $y_1y_2$). It is clear that this orientation is acyclic. 
We claim that the oriented graph contains no directed path of length 3, which implies that there are no shortcuts in this orientation and thus $S_m(2,k)$ is word-representable.
	
	Indeed, suppose that there is a directed edge from $x_1x_2$ to $y_1y_2$ in $S_m(2,k)$. Then $x_1x_2$ is lexicographically smaller than $y_1y_2$, i.e.\  $y_1\geq x_1$. Moreover, either  $x_{2}=y_{1}$ or $x_1=y_{2}$. If $x_{2}=y_{1}$ then $x_2\geq x_1$. By definition of $S_m(2,k)$, we have $x_2-x_1\geq m$, and so, $y_1\geq x_1+m$. If $y_{2}=x_{1}$ then, by a similar argument, $y_1 \geq y_2+m =x_1+m$. Hence, in both cases, $y_1\geq x_1+m$. For $\{\omega_1,\omega_2,\omega_3,\omega_4\}\subseteq V(S_m(2,k))$, suppose that there is a directed path $\omega_1\rightarrow \omega_2 \rightarrow \omega_3\rightarrow \omega_4$ of length 3. Then, the first letter of $\omega_4$ is at least $3m$. However, since $k\leq 3m$, any letter in $A$ is at most $3m-1$, a contradiction. So there are no directed paths of length 3, as desired. 
\end{proof}

To finish the study of the case of $n=2$, it remains to prove that $S_m(2,k)$ is non-word-representable for $k\geq 3m+1$. By Lemma~\ref{obs}(i), it is sufficient to show that $S_m(2,3m+1)$ is non-word-representable, and by Lemma~\ref{obs}(ii), our aim is to prove that $S_1(2,4)$ is non-word-representable. Since this proof is a tedious case-analysis requiring certain special encodings, it is moved to Appendix.

\subsection{A bipartite subgraph of $S_m(n,k)$}
In this section, we introduce some notations necessary for our proofs in the rest of the paper. Let
 $$A^{n}_{m}(k,<):=\{x_1x_2\ldots x_n:x_i\leq x_{i+1}-m,\ 1\leq i\leq n-1\}$$ and $$A^{n}_{m}(k,>):=\{x_1x_2\ldots x_n:x_i\geq x_{i+1}+m,\ 1\leq i\leq n-1\}$$ be, respectively, the sets of increasing  and decreasing words in $A^n_m(k)$. We denote the subgraphs of $S_m(n,k)$ induced by $A^{n}_{m}(k,<)$ and $A^{n}_{m}(k,>)$ by $S^{<}_m(n,k)$ and $S^{>}_m(n,k)$, respectively. Then we claim that both $S^{<}_m(n,k)$ and $S^{>}_m(n,k)$ are triangle-free. Indeed, suppose $\omega_1\omega_2\omega_3$ is a triangle in $S^{<}_m(n,k)$. Let $\omega_1=x_1x_2\ldots x_n$ with $x_i\leq x_{i+1}-m$. Since $\omega_2$ is adjacent to $\omega_3$, their first letters must be different. Because $\omega_1\omega_2$ and $\omega_1\omega_3$ are edges in $S^{<}_m(n,k)$, $\{\omega_2,\omega_3\}=\{x_0x_1\ldots x_{n-1},x_2\ldots x_n x_{n+1}\}$ for some $x_0\leq x_1-m$ and $x_{n+1}\geq x_n+m$, which is a contradiction with $\omega_2\omega_3\in E(S^{<}_m(n,k))$.

    \begin{lem}\label{lem2<}
		$S^{<}_m(2,4m)$ is bipartite.
	\end{lem}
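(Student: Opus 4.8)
The plan is to exhibit an explicit bipartition of $S^{<}_m(2,4m)$ rather than merely to rule out odd cycles. First I would make the edge relation completely explicit. Since $S_m(2,k)$ is obtained from the de Bruijn graph $B(2,k)$ by forgetting orientations, deleting loops, and merging parallel edges, two distinct increasing words $x_1x_2$ and $y_1y_2$ are adjacent in $S^{<}_m(2,4m)$ precisely when they overlap in the de Bruijn sense, i.e.\ when $x_2=y_1$ or $x_1=y_2$. It is convenient to picture each vertex $x_1x_2$ as an arc from $x_1$ to $x_2$ on the value set $\{0,1,\ldots,4m-1\}$; then adjacency says the head of one arc equals the tail of the other, and I will call this common value $v$ the \emph{meeting value} of the edge.

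Next I would propose the bipartition determined by the midpoint $2m$. Put into one part $Y$ all words lying entirely on one side of $2m$, namely those $x_1x_2$ with $x_2\leq 2m-1$ (both letters small) or $x_1\geq 2m$ (both letters large); put into the other part $X$ all words that straddle the midpoint, i.e.\ with $x_1<2m\leq x_2$. Because every vertex is an increasing word with $x_1<x_2$ in $\{0,\ldots,4m-1\}$, exactly one of the three mutually exclusive conditions $x_2<2m$, $x_1\geq 2m$, $x_1<2m\leq x_2$ holds, so $X$ and $Y$ genuinely partition $V(S^{<}_m(2,4m))$.

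The heart of the argument is to show that every edge joins $X$ to $Y$. Given an edge, I would write it (up to the symmetry of the two overlap cases) as joining $u=(a,v)$ and $w=(v,d)$, where $v$ is the meeting value, $a\leq v-m$ and $d\geq v+m$. The key preliminary observation is that validity of both endpoints forces $v$ into a middle band: $a\geq 0$ gives $v\geq m$, while $d\leq 4m-1$ gives $v\leq 3m-1$, so $m\leq v\leq 3m-1$. I would then split on the position of $v$ relative to the midpoint. If $v\leq 2m-1$, then $a\leq v-m\leq m-1$ together with $v\leq 2m-1$ places $u$ entirely below $2m$, so $u\in Y$, while $d\geq v+m\geq 2m$ makes $w$ straddle the midpoint, so $w\in X$. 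If instead $2m\leq v\leq 3m-1$, then $a\leq v-m\leq 2m-1<2m\leq v$ makes $u$ straddle the midpoint, so $u\in X$, while $v\geq 2m$ and $d\geq v+m$ keep $w$ entirely above $2m$, so $w\in Y$. In both cases the endpoints lie in different parts, hence no edge is monochromatic and $S^{<}_m(2,4m)$ is bipartite; isolated vertices, being incident to no edge, may be assigned either part freely.

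The only real subtlety, and the step I expect to require the most care, is pinning the meeting value into the band $m\leq v\leq 3m-1$: this uses that \emph{both} arcs are legitimate vertices, the upper bound $v\leq 3m-1$ coming precisely from the partner arc $(v,d)$ needing room $d\geq v+m$ inside $\{0,\ldots,4m-1\}$. It is exactly this band, straddling the midpoint $2m$, that makes the two-sided colouring work, and it also explains the choice $k=4m$: the alphabet is just large enough that the upper block $\{2m,\ldots,4m-1\}$ still admits arcs of gap $m$, yet small enough that the meeting band $[m,3m-1]$ cannot avoid $2m$. I would finish by recording the two boundary checks ($v=2m$ and the extremal values of $a,d$) to confirm the case split is exhaustive.
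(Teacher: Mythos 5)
Your proof is correct, but it takes a genuinely different route from the paper's. The paper argues by contradiction: it takes a shortest (hence chord-free) odd cycle $C$, orients its edges as in the de Bruijn graph $B(2,4m)$, observes that $k=4m$ forces every directed path in this orientation to have length at most $2$ (a word $x_1x_2$ reachable by a directed path of length $p$ has first letter at least $pm$), and then shows via a chord argument that the oriented cycle decomposes into alternating directed paths of length $2$, forcing $C$ to be even. You instead exhibit an explicit bipartition: writing each edge as $(a,v)$--$(v,d)$ with $a\leq v-m$ and $d\geq v+m$, you pin the meeting value into the band $m\leq v\leq 3m-1$ and check that the word ending at $v$ and the word starting at $v$ always fall on opposite sides of the straddling-versus-non-straddling dichotomy at the midpoint $2m$; your case analysis is complete (the two overlap types are symmetric, the three vertex classes are exhaustive and disjoint, and the bound $v\geq m$ is correctly invoked to get $d\geq 2m$ in the first case). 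Your approach is more elementary and constructive --- it produces the two colour classes explicitly rather than merely excluding odd cycles, and it makes transparent exactly where $k=4m$ enters (consistent with the paper's sharpness remark that $S^{<}_1(2,5)$ contains a $5$-cycle). What the paper's less direct argument buys is methodological continuity: its technique of orienting a shortest odd cycle as in $B(n,k)$ and analysing sources, sinks and path lengths is precisely the template reused in the inductive cluster-graph proof of the general Lemma~\ref{lem<} for $n\geq 3$, where no obvious analogue of your midpoint colouring is available; since later lemmas use only bipartiteness with abstractly named parts, your explicit bipartition would serve equally well as the base case.
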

	\begin{proof}
		Suppose that $C$ is a shortest odd cycle in $S^{<}_m(2,4m)$. Then $C$ is chord-free. Orient the edges in $C$ in the same way as they are oriented in de Bruijn graph $B(2,4m)$ and denote the obtained directed graph by $\mathop{C}\limits ^{\rightarrow}$. 
Since $k=4m$, the longest directed path in $B(2,4m)$ (and hence in $\mathop{C}\limits ^{\rightarrow}$) is of length at most~2. 
		
		If there are no directed paths of length 2 in $\mathop{C}\limits ^{\rightarrow}$, then $\mathop{C}\limits ^{\rightarrow}$ is clearly an even cycle, a contradiction.
		
		Suppose that $xy\rightarrow yz\rightarrow zw$ is a directed path of length 2 in $\mathop{C}\limits ^{\rightarrow}$. Then $xy$ and $zw$ are a source and a sink in $\mathop{C}\limits ^{\rightarrow}$, respectively. If the other neighbour of $xy$ in $C$, say $ya$, is a sink, then $C$ contains the following path (arrows indicate the directions of the edges in $\mathop{C}\limits ^{\rightarrow}$):
		$$by\rightarrow ya\leftarrow xy\rightarrow yz\rightarrow zw.$$
		However, in this case, there is a chord $by\rightarrow yz$ in $C$, which is a contradiction. Thus, $xy$, or generally the head of any directed path of length 2, is not adjacent to any sink. Similarly, the tail of any directed path of length 2 is not adjacent to any source. This implies that $\mathop{C}\limits ^{\rightarrow}$ comprises of a series of alternate directed paths of length 2. Then $C$ is of even length, which is a contradiction.
		
		Therefore $S^{<}_m(2,4m)$ is bipartite, as desired. \end{proof}

Assume that $A$ is a subset of $A_m^n(k)$. For any word $\omega$ in $A^{n-1}_m(k)$, the {\em cluster} with respect to $\omega$ is the subset of $A$ comprised of all words that begin or end with $\omega$. A cluster is non-trivial if it contains at least two vertices.
	The {\em cluster graph} $\mathcal{C}(A)$ of $A$ is the graph whose vertex set is the set of all non-trivial clusters of $A$. There is an edge between two clusters in $\mathcal{C}(A)$ if and only if they have a common vertex in $A$. Note that $\mathcal{C}(A)$ is a subgraph of $S_m(n-1,k)$ provided $n\geq 2$. For example, if $A=\{0123,1234,2123,2345\}$, then the cluster with respect to $123$ is $\{0123,1234,2123\}$, while the cluster with respect to $012$ is $\{0123\}$, and the cluster with respect to $124$ is the empty set. In this case, $\mathcal{C}(A)$ is a single edge with vertex set $\{123,234\}$.

	\begin{lem}\label{lem<}
		If $n\geq 2$ and $k\leq 2mn$ then $S^{<}_m(n,k)$ is bipartite. 
	\end{lem}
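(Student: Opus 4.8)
The plan is to avoid the shortest‑odd‑cycle analysis used for the $n=2$ case (Lemma~\ref{lem2<}) and instead exhibit an explicit proper $2$‑colouring of $S^{<}_m(n,k)$, which immediately gives bipartiteness. The colour I would assign to a vertex $\omega=x_1x_2\ldots x_n$ is the parity of the number of its letters lying below the midpoint $mn$ of the alphabet, that is,
\[
c(\omega)=\bigl|\{\,i:x_i<mn\,\}\bigr|\bmod 2 .
\]
Everything then reduces to showing that $c$ changes across every edge, and this is the one place where the hypothesis $k\le 2mn$ gets used.

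First I would recall that every edge of $S^{<}_m(n,k)$ is a de Bruijn window shift, hence is carried by a single strictly increasing sequence $x_1<x_2<\cdots<x_{n+1}$ with consecutive gaps at least $m$, whose two length‑$n$ windows $x_1\ldots x_n$ and $x_2\ldots x_{n+1}$ are the endpoints of the edge. Passing from one endpoint to the other deletes the letter $x_1$ and inserts the letter $x_{n+1}$, while the $n-1$ middle letters are common. Consequently $c$ flips across the edge exactly when precisely one of $x_1,x_{n+1}$ lies below $mn$.

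The crux is therefore the pair of inequalities $x_1<mn\le x_{n+1}$. The right‑hand one is automatic: the $n$ gaps below $x_{n+1}$ each contribute at least $m$ and $x_1\ge 0$, so $x_{n+1}\ge x_1+nm\ge mn$, whence $x_{n+1}$ does not lie below the midpoint. The left‑hand one is where the bound on $k$ enters: from $x_{n+1}\le k-1\le 2mn-1$ together with $x_{n+1}\ge x_1+nm$ I get $x_1\le x_{n+1}-nm\le mn-1<mn$, so the deleted letter $x_1$ does lie below the midpoint. Thus on each edge the count $\bigl|\{i:x_i<mn\}\bigr|$ drops by exactly one, $c$ changes parity, and $c$ is a proper $2$‑colouring; hence $S^{<}_m(n,k)$ is bipartite.

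I expect the only genuine obstacle to be locating the right invariant: the natural guesses (the parity of $\sum_i x_i$, or of the first or last letter) already fail for $m=1$, and one has to notice that it is the count of letters on one side of the midpoint that behaves correctly, after which the verification is just the two‑line gap estimate above. As a remark, this colouring fits the paper's homomorphism framework: sending each letter $x_i$ to its length‑$m$ block index $\lfloor x_i/m\rfloor$ is a homomorphism $S^{<}_m(n,k)\to S^{<}_1(n,2n)$ (valid precisely because $k\le 2mn$), and on $S^{<}_1(n,2n)$ the colour $c$ is simply the parity of $|S\cap\{0,1,\ldots,n-1\}|$; but the direct colouring above bypasses even that reduction.
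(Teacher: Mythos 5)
Your proof is correct, and it takes a genuinely different route from the paper's. The paper argues by contradiction on a shortest odd cycle: the base case $n=2$ (Lemma~\ref{lem2<}) is settled by orienting the cycle as in the de Bruijn graph $B(2,4m)$ and analyzing its sources and sinks, and the general case is an induction on $n$ in which an odd cycle $C$ in $S^{<}_m(n+1,2m(n+1))$ is collapsed, via the cluster graph $\mathcal{C}(V(C))$, to an odd cycle whose words avoid the $m$ smallest and $m$ largest letters of $V(C)$, hence live in $S^{<}_m(n,2mn)$, contradicting the inductive hypothesis. You instead exhibit an explicit proper $2$-colouring, resting on the structural fact you correctly isolate: every edge of $S^{<}_m(n,k)$ joins the two length-$n$ windows of a single increasing $(n+1)$-sequence $x_1<\cdots<x_{n+1}$ with gaps at least $m$ (this is exactly what the de Bruijn adjacency gives on increasing words, and both-way arcs are impossible there). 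Your two estimates are right: $x_{n+1}\ge x_1+nm\ge mn$, and $x_1\le x_{n+1}-nm\le (2mn-1)-nm<mn$, so the deleted letter always lies below $mn$ while the inserted letter lies at or above it, and the parity $c(\omega)=|\{i:x_i<mn\}|\bmod 2$ flips across every edge. This is shorter than the paper's proof, needs no induction, subsumes the base case Lemma~\ref{lem2<} (take $n=2$, $k\le 4m$), dispenses with the cluster-graph machinery entirely, and makes the sharpness of the bound transparent: once $k\ge 2mn+1$ the deleted letter can reach $mn$, which is precisely what happens in the paper's own odd-cycle witnesses, e.g.\ $01$--$12$--$23$--$34$--$13$ in $S^{<}_1(2,5)$, where the edge from $23$ to $34$ deletes the letter $2=mn$. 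What the paper's route buys in exchange is the cluster-graph projection of cycles from length $n+1$ to length $n$, a tool of independent interest that does not require guessing an invariant; but as a self-contained proof of this lemma, your colouring is the cleaner argument, and your closing remark (that $x\mapsto\lfloor x/m\rfloor$ is a homomorphism $S^{<}_m(n,k)\to S^{<}_1(n,2n)$ for $k\le 2mn$) even fits it back into the paper's own embedding framework.
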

	\begin{proof}
		Since $S^{<}_m(n,k)$ is a subgraph of $S^{<}_m(n,k+1)$, it is sufficient to prove that $S^{<}_m(n,2mn)$ is bipartite. We apply the induction on $n$. By Lemma \ref{lem2<}, $S^{<}_m(n,2mn)$ is bipartite for $n=2$. Assume that $S^{<}_m(n,2mn)$ is bipartite. Now we consider the graph $S^{<}_m(n+1,2m(n+1))$.
		
		Suppose that $C$ is a shortest (and hence chord-free) odd cycle in $S^{<}_m(n+1,2m(n+1))$ and the length of $C$ is $2t+1$. Direct the edges in $C$ in the same way as they are oriented in de Bruijn graph $B(n+1,2m(n+1))$ and obtain the directed graph $\mathop{C}\limits ^{\rightarrow}$. Let $\omega_1'\omega_1\omega_2\omega_2'$ be four consecutive vertices in $C$. Without loss of generality, assume that $\omega_2$ starts with $\omega$ and ends with $\omega'$, where $\omega$ and $\omega'$ are both in $A^n_m(k,<)$. Based on the orientation of $\mathop{C}\limits ^{\rightarrow}$, we have the following observations.
		\begin{wst}
			\item[\footnotesize $\bullet$] At most one of $\{\omega_1,\omega_2\}$ is a sink or a source. Otherwise, without loss of generality, suppose that $\omega_1$ is a sink and $\omega_2$ is a source. Then both $\omega_1$ and $\omega_2'$ begin with $\omega'$, while $\omega_1'$ ends with $\omega'$. This implies that $\omega_1'$ is adjacent to $\omega_2'$ in $S^{<}_m(n+1,2m(n+1))$, which is a contradiction. 
			\item[\footnotesize $\bullet$] If $\omega_2$ is a sink then $\{\omega_1,\omega_2,\omega_2'\}$ in $C$ is the cluster with respect to $\omega$.
			\item[\footnotesize $\bullet$] If $\omega_2$ is a source then $\{\omega_1,\omega_2,\omega_2'\}$ is the cluster with respect to $\omega'$.
			\item[\footnotesize $\bullet$] If none of $\omega_1$ and $\omega_2$ is a sink or a source and $\omega_1\rightarrow\omega_2$ then $\{\omega_1,\omega_2\}$ is the cluster with respect to $\omega$. 
			\item[\footnotesize $\bullet$] If none of $\omega_1$ and $\omega_2$ is a sink or a source and $\omega_2\rightarrow\omega_1$ then $\{\omega_1,\omega_2\}$ is the cluster with respect to $\omega'$. 	
		\end{wst}
		
Let $s$ be the total number of sinks and sources in $\mathop{C}\limits ^{\rightarrow}$. Then $s$ must be even. Combining with the above observations, the cluster derived from consecutive vertices in $C$ has size 3 if it contains a sink or a source and has size 2 otherwise. Thus, the cluster graph $\mathcal{C}(V(C))$ also contains a cycle $C'$ of length $2t+1-s$ (recall that the length of $C$ is $2t+1$), which is still odd. We give an illustration of $\mathop{C}\limits ^{\rightarrow}$ and $\mathcal{C}(V(C))$ for the graph $S_1^<(4,8)$ in Figure \ref{fig2}. In this figure, since $t=7$ and $s=4$, the length of $C'$ is $11$. 
		
		Now, let $x$ be the smallest letter occurring in any word in $V(C)$. For any letter $x_0\in \{x,x+1,\ldots,x+m-1\}$, if $\omega=x_0x_1\ldots x_{n}$ is a word in $V(C)$ beginning with $x_0$, then $x_1\geq x_0+m\geq x+m$, and $\omega$ must be a source in $\mathop{C}\limits ^{\rightarrow}$. In this case, $\omega$ is in the cluster with respect to $x_1\ldots x_{n}$, which does not contain $x_0$. Thus, no word in $C'$ contains any letter in $\{x,x+1,\ldots,x+m-1\}$. Similarly, no word in $C'$ contains any letter in $\{y,y-1,\ldots,y-m+1\}$, where $y$ is the largest letter in $V(C)$. Then, the alphabet comprised of all letters occurring in $V(C')$ has size at most $2m(n+1)-2m=2mn$.

		Note that the cluster graph $\mathcal{C}(V(C))$ is a subgraph of $S^{<}_m(n,2m(n+1))$. Then, $C'$ is isomorphic to a subgraph of $S^{<}_m(n,2mn)$, which is a contradiction to the induction hypothesis. Hence, any cycle in $S^{<}_m(n+1,2m(n+1))$ is an even cycle and $S^{<}_m(n+1,2m(n+1))$ is bipartite, as desired. \end{proof}

    \begin{figure}
    	\centering
    	\includegraphics[width=135mm]{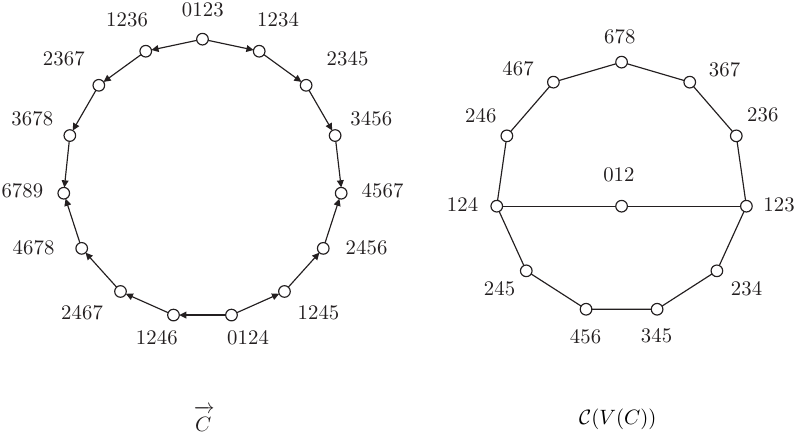}
    	\caption{The oriented cycle $\mathop{C}\limits ^{\rightarrow}$ and the cluster graph $\mathcal{C}(V(C))$ in $S_1^<(4,8)$.}
    	\label{fig2}
    \end{figure}

    {\noindent \bf Remark.} In Lemmas~\ref{lem2<} and~\ref{lem<}, the bounds on $k$ are sharp because 
$S^<_{1}(2,5)$ contains an odd cycle
    $$01-12-23-34-13-01$$    
and $S^<_{1}(3,7)$ contains an odd cycle
    $$012-123-234-345-456-245-124-012.$$

\subsection{Word-representability of $S_m(n,k)$ for odd $n\geq 3$}
First, we show that Theorem~\ref{thmsnk}(ii) holds for every odd $n\geq 5$.

\begin{lem}\label{lemo2n}
	For any odd $n\geq 5$ and $k\leq 2mn$, $S_m(n,k)$ is word-representable.
\end{lem}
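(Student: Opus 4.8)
The plan is to follow the embedding technique exactly as in the proof of Theorem~\ref{thmspn}, using the ascent--descent map into $S_0(n-1,2)$, and to repair the single place where that map fails to be a homomorphism, namely on the monotone words. For $\omega=x_1\cdots x_n\in A^n_m(k)$ set $\tau(\omega)=y_1\cdots y_{n-1}$ with $y_i=1$ if $x_i<x_{i+1}$ and $y_i=0$ otherwise (note $x_i\neq x_{i+1}$ since $m\geq 1$). As in Section~\ref{SP(n)-sec}, if $\omega\omega'\in E(S_m(n,k))$ then $\tau(\omega')$ is a one-letter shift of $\tau(\omega)$, so the two are adjacent in $S_0(n-1,2)$ unless they coincide, and coincidence forces $\tau(\omega)\in\{1^{n-1},0^{n-1}\}$, i.e.\ both $\omega,\omega'$ increasing or both decreasing. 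Hence $\tau$ is already a homomorphism on the \emph{mixed} words (those with non-monotone pattern), and the only obstruction comes from the increasing words $A^n_m(k,<)$ and decreasing words $A^n_m(k,>)$, which induce $S^<_m(n,k)$ and $S^>_m(n,k)$.

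This is where the hypotheses enter. Since $k\leq 2mn$, Lemma~\ref{lem<} (and its order-reversing mirror for decreasing words) shows $S^<_m(n,k)$ and $S^>_m(n,k)$ are bipartite; fix a $2$-colouring of each. I would then take for $H$ the graph obtained from the induced subgraph of $S_0(n-1,2)$ on the non-monotone words by deleting the vertices $1^{n-1}$ and $0^{n-1}$ and replacing each by an edge: a pair $p_<,q_<$ (the two colour classes of $S^<_m(n,k)$) joined by an edge and both made adjacent to the former neighbours $1^{n-2}0$ and $01^{n-2}$ of $1^{n-1}$, and symmetrically a pair $p_>,q_>$ adjacent to $10^{n-2}$ and $0^{n-2}1$. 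Define $f$ to send a mixed word to $\tau(\omega)$, an increasing word to $p_<$ or $q_<$ by colour, and a decreasing word to $p_>$ or $q_>$. That $f$ preserves every type of edge is routine from the shift description of adjacency: internal $S^<$-edges map to $p_<q_<$, edges from increasing to mixed words have their mixed endpoint of pattern $1^{n-2}0$ or $01^{n-2}$, and there are no edges between increasing and decreasing words for $n\geq 3$.

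For the orientation I would exploit oddness. With $n$ odd the word length $n-1$ is even, so in the colouring of Section~\ref{Pre-sec} \emph{both} monotone vertices $1^{n-1}$ and $0^{n-1}$ are \textbf{Red}, their neighbours $1^{n-2}0,10^{n-2}$ are \textbf{Blue} and $01^{n-2},0^{n-2}1$ are \textbf{Green}. Orienting by colour levels (Red below Blue below Green) and placing $p_<,q_<,p_>,q_>$ strictly below the Blue level, with $p_<\to q_<$ and $p_>\to q_>$, gives an acyclic level orientation of $H$ in which every edge goes up. Because the $3$-coloured non-monotone part has no directed path of length $\geq 3$, the only shortcutting paths are $p_<\to q_<\to 1^{n-2}0\to 01^{n-2}$ and $p_>\to q_>\to 10^{n-2}\to 0^{n-2}1$, and one checks directly that each spans a transitive tournament in $H$; hence the orientation of $H$ is semi-transitive. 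This is precisely where parity is used: for even $n$ the two monotone vertices receive the distinct colours Blue and Green, the level picture breaks, and the even case has to be handled separately and with a non-semi-transitive $H$, as in Lemma~\ref{leme2n}.

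The main work---and the step I expect to be the obstacle---is Step~3: lifting these shortcutting paths to $G=S_m(n,k)$ and proving they are not genuine shortcuts. A lifted path is $a\to b\to c\to d$ with $a,b$ increasing (adjacent in $S^<_m(n,k)$) and $c,d$ mixed, $\tau(c)=1^{n-2}0$, $\tau(d)=01^{n-2}$ (plus the decreasing mirror). Since a homomorphism need not reflect edges, the four chords need not all be present in $G$, so I must show that whenever the shortcutting edge $a\to d$ is present the remaining chords $a\to c$ and $b\to d$ are present too, making the quadruple a transitive tournament. I would do this by rewriting each adjacency in its explicit left/right-shift form and comparing forced letters: $a\to d$ forces $d$ to be a right-shift of $a$, which pins down enough of $a,b,c,d$ (all sharing the increasing block of $a$) to read off the two missing adjacencies. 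I expect this finite case analysis, together with confirming that inserting the gadgets creates no further shortcutting paths, to be the delicate part, while the homomorphism check and acyclicity are bookkeeping. A semi-transitive orientation of $G$ then yields word-representability by Theorem~\ref{semi-trans-thm}.
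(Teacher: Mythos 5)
Your reduction to the monotone words is sound and your gadget graph $H$ is indeed semi-transitively oriented, but Step~3 --- the step you yourself flagged as the obstacle --- fails, and it fails irreparably for the level order you chose. Consider a lifted path $a\to b\to c\to d$ with $a,b$ increasing, $\tau(c)=1^{n-2}0$, $\tau(d)=01^{n-2}$. Writing $b=b_1\cdots b_n$, adjacency forces $c=b_2\cdots b_n y$ with $y\le b_n-m$, then $d=wb_2\cdots b_n$ with $w\ge b_2+m$, and the shortcutting edge $ad$ forces $a=b_2\cdots b_n a'$ with $a'\ge b_n+m$, which is automatically consistent with $ab\in E$. But now $a$ and $c$ share their first $n-1$ letters and are \emph{not} adjacent (equality of a length-$(n-1)$ suffix of one with the prefix of the other would force $b_3=b_2$, impossible since $b_3\ge b_2+m$), and likewise $b$ and $d$ are not adjacent (it would force $w=b_2$ or $b_1=b_2$). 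So the two chords you hoped to ``read off'' are provably absent, while the edges $ab,bc,cd,ad$ can all be present simultaneously: for $n=5$, $m=1$, $k=10$ take $b=12345$, $a=23456$, $c=23450$, $d=92345$; if your fixed $2$-colouring of $S^{<}_1(5,10)$ happens to put $23456$ in the $q_<$ class, shift everything up and take $b=23456$, $a=34567$, $c=34560$, $d=93456$ instead. In either case your orientation ($p_<\to q_<$ placed below Blue, Blue below Green) directs this induced $4$-cycle as $a\to b\to c\to d$ together with $a\to d$: acyclic, non-transitive, containing the path and the shortcutting edge --- a shortcut. Hence your orientation of $S_m(n,k)$ is not semi-transitive, no matter how the bipartition classes are labelled.

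The defect is structural: placing \emph{both} colour classes of the increasing words below the Blue level lets the increasing pair sit at the start of a shortcutting path, where nothing prevents the configuration above. The paper's proof avoids exactly this. Its $H$ (Figure~\ref{fig3}) merges one class $R_1$ (together with $\overline{R}_1$ and the mixed Red words) into a single source $R'$, and places the other class $R_2$ (resp.\ $\overline{R}_2$) strictly \emph{between} $B$ and $G$; every shortcutting path then has the form $R'\to B\to R_2\to G$, with the monotone vertex in the middle, and a form analysis of $\tau(\omega_{R'})$ shows that a vertex of $R'$ adjacent to both $\omega_B$ and $\omega_G$ would have to equal $\omega_{R_2}$ --- so the lifted configuration cannot exist at all, which is what makes Step~3 go through. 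Repositioning your gadget accordingly (one endpoint a source, the other between the Blue and Green levels) essentially recovers the paper's construction; as written, your proof has a genuine counterexample at Step~3.
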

\begin{proof}
	By Lemma \ref{lem<}, $S^{<}_m(n,k)$ is bipartite, the parts of which are denoted by $R_1$ and $R_2$.
    Let $\overline{R}_1\cup \overline{R}_2$ be a partition of $A^n_m(k,>)$, where 
	$$\overline{R}_i=\{x_nx_{n-1}\ldots x_1\ |\ x_1x_2\ldots x_n \in R_i\} \text{ for } i=1,2.$$
	Then $S^{>}_m(n,k)$ is also bipartite with parts $\overline{R}_1\cup \overline{R}_2$.
       
    For each vertex $\omega=x_1x_2\ldots x_n$ in $S_m(n,k)$, put $\tau(\omega)=y_1\ldots y_{n-1}$ where $y_i=0$ if $x_i>x_{i+1}$ and $y_i=1$ otherwise. Then $\tau$ is a mapping from $S_m(n,k)$ to $S_0(n-1,2)$. Also, recall that $S_0(n-1,2)$ is 3-colorable, whose vertices can be colored based on its forms introduced in Section~\ref{Pre-sec}:
        \begin{align*}
        	\text{\bf Red: } e_0, e_1, e_0a_1, e_1a_0; &&\text{\bf Blue: } o_1e_0a_1, o_1o_0b_1; &&
        	\text{\bf Green: } o_0e_1a_0, o_0o_1b_0.
        \end{align*}

Note that a word of the form $e_0b_1$ (resp. $e_1b_0$) can be either $e_0$ or $e_0a_1$ (resp. $e_1$ or $e_1a_0$). Here we separate these two cases.
By the definition of   $A^n_m(k,<)$ and $A^n_m(k,>)$, the form of $\tau(\omega)$ is $e_1$ (resp., $e_0$) if and only if $\omega$ is a vertex in $R_1\cup R_2$ (resp., $\overline{R}_1\cup \overline{R}_2$). 
    
        \begin{figure}
        	\begin{center}        			
        		\begin{tikzpicture}[->,>=stealth',shorten >=1pt,node distance=2cm,auto,main node/.style={circle,draw,align=center}]
        			
        			\node[main node] (1) {$R'$};
        			\node[main node] (2) [left of=1] {$B$};
        			\node[main node] (3) [above of=1] {\footnotesize  $\overline{R}_2$};
        			\node[main node] (4) [below of=1] {\footnotesize  $R_2$};
        			\node[main node] (5) [right of=1] {$G$};
        			
        			\path
        			(1) edge node {} (2);
        			\path
        			(1) edge node {} (3);
        			\path
        			(1) edge node {} (4);
        			\path
        			(1) edge node {} (5);
        			\path
        			(2) edge [bend left=27] node  {} (5);
        			\path
        			(2) edge node {} (3);
        			\path
        			(2) edge node {} (4);	
        			\path
        			(3) edge node {} (5);
        			\path
        			(4) edge node {} (5);			        
        		\end{tikzpicture}		
        	\end{center}
        	\caption{A graph $H$ and its orientation.}
        	\label{fig3}
        \end{figure}
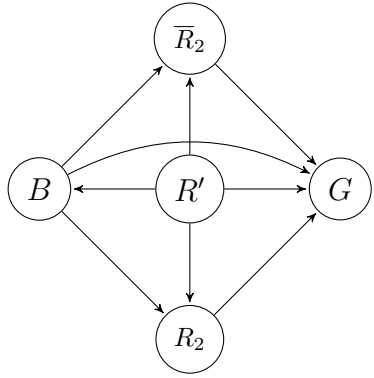
{\bf  Step~1.}		Let $H$ be the underlying (undirected) graph with vertex set $\{R_2,\overline{R}_2,R',B,G\}$ in Figure~\ref{fig3}. 
Consider the following mapping $f$ from $S_m(n,k)$ to $H$. 
		\begin{wst}
			\item[\footnotesize $\bullet$] If $\omega\in R_2$ then $f(\omega)=R_2$.
			\item[\footnotesize $\bullet$] If $\omega\in \overline{R}_2$ then $f(\omega)=\overline{R}_2$.
            \item[\footnotesize $\bullet$] If $\omega\in R_1\cup \overline{R}_1$ or the form of $\tau(\omega)$ is one of $\{e_0a_1, e_1a_0\}$ then $f(\omega)=R'$.            \item[\footnotesize $\bullet$] If the form of $\tau(\omega)$ is one of $\{o_1e_0a_1, o_1o_0b_1\}$ then $f(\omega)=B$.
            \item[\footnotesize $\bullet$] If the form of $\tau(\omega)$ is one of $\{o_0e_1a_0, o_0o_1b_0\}$ then $f(\omega)=G$.		
            \end{wst}
Clearly, all sets of vertices of $S_m(n,k)$ mapped into the same vertex of $H$ are independent.
Since there are  no edges between $R_2$ and $\overline{R}_2$, $f$ is a homomorphism from $S_m(n,k)$ to $H$. Then direct $H$ as in Figure~\ref{fig3}.

{\bf Step~2.} Note that the orientation of 
$H$ is acyclic and contains only two  shortcutting paths of length at least 3:  
		\begin{align*}
			R'\rightarrow B\rightarrow R_2\rightarrow G;&&
			R'\rightarrow B\rightarrow \overline{R}_2\rightarrow G.
		\end{align*}

Now direct each edge in $S_m(n,k)$ as in $H$ (i.e.\  $u\rightarrow v$ in $S_m(n,k)$ if $f(u)\rightarrow f(v)$ in $H$). 
 It remains to prove that the orientation of $S_m(n,k)$ contains no shortcuts and hence it is semi-transitive.
		
{\bf Step~3.}		Suppose that there exists a directed path $\omega_{R'}\rightarrow\omega_B\rightarrow\omega_{R_2}\rightarrow\omega_G$ with $f(\omega_{X})=X$ for all $X\in \{R',B,R_2,G\}$, and the path introduces a potential shortcutting edge, that is, $\omega_{R'}\rightarrow\omega_G$.
		
		Let $\omega_{R_2}=x_1x_2\ldots x_n$ with $x_i\leq x_{i+1}-m$. 
Since $n$ is odd and the forms of $\tau(\omega_G)$ and $\tau (\omega_B)$ start with $o_0$ and $o_1$ respectively, we have
$\omega_B=x_2\ldots x_{n-1}x_ny$ and $\omega_G=zx_1x_2\ldots x_{n-1}$, where $y\leq x_n-m$ and $z\geq x_1+m$. 

Since $\omega_{R'}$ is adjacent to $\omega_{G}$, we have either $\omega_{R'}=wzx_1\ldots x_{n-2}$ or $\omega_{R'}=x_1\ldots x_{n-1}w$ for some $w$. Since $n\ge 5$, in the former case the form of $\tau(\omega_{R'})$ is $e_0e_1$, or $o_1o_0e_1$, while
in the latter case, it is either $o_1o_0$ or $e_1$. In either case, the form of $\tau(\omega_{R'})$ belongs to the set $F_1=\{o_1o_0, o_1o_0e_1, e_0e_1, e_1 \}$. On the other hand, $\omega_{R'}$ is adjacent to $\omega_{B}$, and hence either $\omega_{R'}=ux_2\ldots x_n$ or $\omega_{R'}=x_3\ldots x_{n}yu$ for some $u$. In the former case, the form of $\tau(\omega_{R'})$ is either $e_1$ or $o_0o_1$, while in the latter case it may be $e_1e_0$ or $e_1o_0o_1$.
Then, the form of $\tau(\omega_{R'})$ belongs to the set $F_2=\{o_0o_1, e_1e_0, e_1o_0o_1, e_1 \}$. Clearly, $F_1\cap F_2= \{ e_1\}$. But if the form of $f(\omega_{R'})$ is $e_1$, then 
$\omega_{R'}=x_1\ldots x_{n-1}w=ux_2\ldots x_n=x_1x_2\ldots x_n=\omega_{R_2}$, a contradiction.

So, there exists no shortcut induced by $\{\omega_{R'},\omega_B,\omega_{R_2},\omega_G\}$. Similarly, there are no shortcuts induced by $\{\omega_{R'},\omega_B,\omega_{\overline{R}_2},\omega_G\}$. Hence, the orientation of $S_m(n,k)$ is shortcut-free and therefore it is semi-transitive.  By Theorem~\ref{semi-trans-thm}, $S_m(n,k)$ is word-representable for all odd $n\geq 5$ and $k\leq 2mn$. \end{proof}

We prove the remaining case of $n=3$ in the following lemma.

\begin{lem}\label{lemma-6}
	For $k\leq 4m$, $S_m(3,k)$ is word-representable.
\end{lem}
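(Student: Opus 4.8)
The plan is to mirror the structure of the proof of Lemma~\ref{lemo2n}, adapting it to the case $n=3$ where the forms $\tau(\omega)$ have length only $n-1=2$. By Lemma~\ref{lem<} with $n=2$, the increasing subgraph $S^{<}_m(3,4m)$ is bipartite with parts $R_1,R_2$, and reversing words gives a bipartition $\overline{R}_1,\overline{R}_2$ of $S^{>}_m(3,4m)$. As before, I would define $\tau(\omega)=y_1y_2$ with $y_i=0$ if $x_i>x_{i+1}$ and $y_i=1$ otherwise, so $\tau$ maps into $S_0(2,2)$, whose four vertices $00,01,10,11$ correspond to the forms $e_0$ (i.e.\ $00$), $e_1$ (i.e.\ $11$), $o_0o_1$ (i.e.\ $01$), and $o_1o_0$ (i.e.\ $10$). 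The vertices with $\tau(\omega)=11$ are exactly those in $R_1\cup R_2$, and those with $\tau(\omega)=00$ are exactly those in $\overline{R}_1\cup\overline{R}_2$.

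First I would build the target graph $H$ and the homomorphism $f$ in the spirit of Figure~\ref{fig3}, but taking into account that $S_0(2,2)$ has only the single edge $\{01,10\}$ (equivalently $010\sim101$). The natural refinement is to put $f(\omega)=R_2$ if $\omega\in R_2$, $f(\omega)=\overline{R}_2$ if $\omega\in\overline{R}_2$, collapse $R_1\cup\overline{R}_1$ into a single class $R'$, and send the two forms $o_1o_0$ and $o_0o_1$ to classes $B$ and $G$ respectively. I would then check that $f$ is a homomorphism: each class is independent (the parts $R_i,\overline{R}_i$ are independent by bipartiteness, and classes coming from distinct $\tau$-forms are independent because adjacent vertices in $S_m(3,k)$ have $\tau$-images that are either equal or adjacent in $S_0(2,2)$), and I would verify which pairs of classes can carry edges. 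Crucially, one must confirm there is no edge between $R_2$ and $\overline{R}_2$, exactly as in Lemma~\ref{lemo2n}. I would then fix an acyclic orientation of the resulting $H$ (Step~1) and enumerate its shortcutting paths of length $\geq 3$ (Step~2), aiming for an orientation in which these are few and structurally constrained.

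The decisive part is Step~3: ruling out that any shortcutting path in $H$ lifts to an actual shortcut in $S_m(3,k)$. Here I would argue exactly as in Lemma~\ref{lemo2n}: given a putative directed path $\omega_{R'}\rightarrow\omega_B\rightarrow\omega_{R_2}\rightarrow\omega_G$ (and the symmetric one through $\overline{R}_2$) together with the chord $\omega_{R'}\rightarrow\omega_G$, I would write $\omega_{R_2}=x_1x_2x_3$ with $x_i\leq x_{i+1}-m$, deduce the overlap structure of $\omega_B$ and $\omega_G$ from their $\tau$-forms $o_1o_0$ and $o_0o_1$, and then show that the two adjacency constraints on $\omega_{R'}$ (adjacent to $\omega_G$ and adjacent to $\omega_B$) force incompatible forms unless $\omega_{R'}=\omega_{R_2}$, contradicting that the four vertices are distinct. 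The derivation of these forms is where the short length $n=3$ bites: because $\tau$-words have length $2$, the case analysis is more delicate than for $n\geq5$, and several overlaps that were separated by intermediate letters now coincide, so I expect the main obstacle to be checking that no genuine shortcut survives and, if it does, repairing the orientation of $H$ (or further splitting a class) so that the offending shortcutting path disappears.

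I would expect that, as in the statement of Theorem~\ref{thmsnk}(i), the bound $k\leq 4m=(n+1)m$ for $n=3$ is what makes the overlap arithmetic work: the letter $z\geq x_1+m$ prepended in $\omega_G$ and the letter $y\leq x_n-m$ appended in $\omega_B$, combined with $x_1\leq x_2-m\leq x_3-2m$, keep all occurring letters within an alphabet of size $4m$, which is precisely the range in which $S^{<}_m(3,k)$ is guaranteed bipartite by Lemma~\ref{lem<}. Once Step~3 is closed, the orientation of $S_m(3,k)$ is acyclic and shortcut-free, hence semi-transitive, and word-representability of $S_m(3,k)$ for $k\leq 4m$ follows from Theorem~\ref{semi-trans-thm}.
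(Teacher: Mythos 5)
Your proposal follows the paper's architecture faithfully (same target graph $H$ of Figure~\ref{fig3}, same five classes with $B$ the peaks, i.e.\ $\tau(\omega)=10$, and $G$ the valleys, $\tau(\omega)=01$, same two shortcutting paths to analyze), but there is a genuine gap at exactly the point where you hedge: Step~3 does \emph{not} close for an arbitrary bipartition supplied by Lemma~\ref{lem<}. The form analysis of Lemma~\ref{lemo2n} forces $\omega_{R'}=\omega_{R_2}$ only in one of the two overlap patterns; for $n=3$ the second pattern $\omega_{R'}=uyx_1=x_3zv$ is realizable and yields the decreasing word $\omega_{R'}=x_3yx_1$, which is adjacent to both $\omega_B=x_2x_3y$ and $\omega_G=yx_1x_2$, and nothing in your setup prevents its reversal $x_1yx_3$ from lying in $R_1$, i.e.\ $\omega_{R'}\in\overline{R}_1$, mapped to $R'$. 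This is not a mere inconvenience of the argument: the induced orientation can contain a real shortcut. Take $m=2$, $k=8$. The components of $S^{<}_2(3,8)$ include the single edge $025$--$257$ and the path $035$--$357$--$135$; assigning $025,357$ to $R_1$ and $035,135,257$ to $R_2$ (extended arbitrarily to the other components) is a perfectly proper $2$-coloring. But then $520\in\overline{R}_1$ is mapped to $R'$, $352$ is a peak ($B$), $035\in R_2$, $203$ is a valley ($G$), and $520\rightarrow 352\rightarrow 035\rightarrow 203$ together with $520\rightarrow 203$ is a directed path of length $3$ plus its shortcutting edge; since $520$ is not adjacent to $035$ and $352$ is not adjacent to $203$, these four vertices induce a genuine shortcut. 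So ``argue exactly as in Lemma~\ref{lemo2n}'' fails for $n=3$, and your fallback (``repairing the orientation of $H$ or further splitting a class'') is left unexecuted -- and is in any case not where the fix lives.

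The missing idea is the paper's \emph{explicit} choice of the bipartition, which is also where the bound $k\leq 4m$ really enters (not via Lemma~\ref{lem<}: for $n=3$ that lemma gives bipartiteness up to $k\leq 6m$, and incidentally you would need it with $n=3$, not $n=2$, as you wrote). Since $x_3-x_1\geq 2m$ for every $x_1x_2x_3\in A^3_m(4m,<)$, this set splits into exactly three letter-range cells: $A$ ($x_1\leq m-1$, $x_3\geq 3m$), $B$ ($x_1\geq m$, $x_3\geq 3m$) and $C$ ($x_1\leq m-1$, $2m\leq x_3\leq 3m-1$); the cell $A$ consists of isolated vertices and all edges of $S^{<}_m(3,4m)$ run between $B$ and $C$, so one may take $R_1=A\cup B$ and $R_2=C$, a bipartition defined by letter ranges rather than chosen per component. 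With this canonical choice the troublesome case dies: $\omega_{R_2}=x_1x_2x_3\in C$ pins $0\leq x_1\leq m-1$ and $2m\leq x_3\leq 3m-1$, hence $x_1yx_3\in C=R_2$ by definition, contradicting $x_1yx_3\in R_1$ (which is forced by $\omega_{R'}=x_3yx_1\in\overline{R}_1$). In the example above this is visible directly: the paper's bipartition puts $025$ in $R_2$, not $R_1$, destroying the shortcut. So to complete your proof you must replace the generic bipartition from Lemma~\ref{lem<} by this explicit one; everything else in your plan then goes through as you describe.
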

\begin{proof}
	By Lemma~\ref{obs}, it is sufficient to show that $S_m(3,4m)$ is word-representable.
	Let $A$, $B$ and $C$ be the following subsets of $A^3_m(4m,<)$:
	\begin{align*}
	A&:=\{x_1x_2x_3\in A^3_m(4m,<):0\leq x_1\leq m-1 \text{ and } 3m\leq x_3\leq 4m-1\};\\
	B&:=\{x_1x_2x_3\in A^3_m(4m,<):m\leq x_1\leq 2m-1 \text{ and } 3m\leq x_3\leq 4m-1\};\\
	C&:=\{x_1x_2x_3\in A^3_m(4m,<):0\leq x_1\leq m-1 \text{ and } 2m\leq x_3\leq 3m-1\}.
    \end{align*}
    Since $x_3-x_1\geq 2m$ for $x_1x_2x_3\in A^3_m(4m,<)$, $A\cup B\cup C$ is a partition of $A^3_m(4m,<)$. Note that any vertex in $A$ is an isolated vertex in $S^<_m(3,4m)$ and all edges in $S^<_m(3,4m)$ are between $B$ and $C$. Let $R_1=A\cup B$ and $R_2=C$. Then $S_m^<(3,4m)$ is bipartite with parts $R_1\cup R_2$. Let $\overline{R}_1\cup \overline{R}_2$ be a partition of $A^n_m(4m,>)$, where 
    $$\overline{R}_i=\{x_3x_2x_1\ |\ x_1x_2x_3 \in R_i\} \text{ for } i=1,2.$$ 
    Then, $S^{>}_m(3,4m)$ is also bipartite with parts $\overline{R}_1\cup \overline{R}_2$.

{\bf Step~1.} Define a mapping from $S_m(3,4m)$ to the graph $H$ in Figure~\ref{fig3} as follows.
Let $w=x_1x_2x_3$ be a vertex of $S_m(3,4m)$.
\begin{wst}
			\item[\footnotesize $\bullet$] If $\omega\in R_2$, then $f(\omega)=R_2$.
			\item[\footnotesize $\bullet$] If $\omega\in \overline{R}_2$, then $f(\omega)=\overline{R}_2$.
            \item[\footnotesize $\bullet$] If $\omega\in R_1\cup \overline{R}_1$  then $f(\omega)=R'$.
            \item[\footnotesize $\bullet$] If $x_2\ge x_1+m$ and $x_2\ge x_3+m$, then $f(\omega)=B$. 
            \item[\footnotesize $\bullet$] If $x_2\le x_1-m$ and $x_2\le x_3-m$, then $f(\omega)=G$. 
		\end{wst}

{\bf Step~2.}	Direct $S_m(3,4m)$ as in $H$. Again, note that 
	$H$ contains only two shortcutting paths of length at least 3:  
	\begin{align*}
		R'\rightarrow B\rightarrow R_2\rightarrow G;&&
		R'\rightarrow B\rightarrow \overline{R}_2\rightarrow G.
	\end{align*}

{\bf Step~3.}    Suppose that there exists a directed path $\omega_{R'}\rightarrow\omega_B\rightarrow\omega_{R_2}\rightarrow\omega_G$ with $f(\omega_{X})=X$ for all $X\in \{R',B,R_2,G\}$, and the path introduces a potential shortcutting edge, that is, $\omega_{R'}\rightarrow\omega_G$.
    Assume that $\omega_{R_2}=x_1x_2x_3$ with $x_1+m\leq x_2\leq x_3-m$. According to the forms of $\omega_{B}$ and $\omega_{G}$, we have $\omega_B = x_2x_3z$ and $\omega_G = yx_1x_2$ where $x_1+m\le y$ and $z \le x_3-m$.
Since $\omega_{R'}$ is adjacent to $\omega_{B}$, we have $\omega_{R'}=vx_2x_3$ for some $v\leq x_2-m$ or $\omega_{R'}=x_3zv$ for some $v\le z-m$. 
Since $\omega_{R'}$ is adjacent to $\omega_{G}$, we have $\omega_{R'}=x_1x_2u$ for some  $u \ge x_2+m$ or $\omega_{R'}=uyx_1$ for some $u\geq y+m$. 
So, two cases are possible. If $\omega_{R'}=x_1x_2u=vx_2x_3$ then $v=x_1, u=x_3$ and $\omega_{R'}=x_1x_2x_3=\omega_{R_2},$ a contradiction.  
 If $\omega_{R'}=uyx_1=x_3zv$ then $v=x_1, u=x_3$ and $z=y,$ i.e.\  $\omega_{R'}=x_3yx_1$. 
Note  that $\omega_{R'}\in \overline{R}_1$  and hence $x_1yx_3$ is a word in $R_1$. However, since $\omega_{R_2}=x_1x_2x_3\in R_2$, $0\leq x_1\leq m-1$ and $2m\leq x_3\leq 3m-1$. This implies $x_1yx_3$ should be in $R_2$ but not in $R_1$, a contradiction.
    
    So, there are no shortcuts induced by $\{\omega_{R'},\omega_B,\omega_{R_2},\omega_G\}$. Similarly, there are no shortcuts induced by $\{\omega_{R'},\omega_B,\omega_{\overline{R}_2},\omega_G\}$. Hence, the orientation of $S_m(3,4m)$ is shortcut-free and therefore it is semi-transitive.  By Theorem~\ref{semi-trans-thm}, $S_m(3,4m)$ is word-representable, as desired. \end{proof}

\subsection{Word-representability of $S_m(n,k)$ for even $n\geq 4$}\label{even-Sec}
First, we show that Theorem~\ref{thmsnk}(ii) holds for any even $n\geq 6$.

\begin{lem}\label{leme2n}
	For even $n\geq 6$ and $k\leq 2mn$, $S_m(n,k)$ is word-representable.
\end{lem}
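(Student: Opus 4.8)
The plan is to run the embedding technique precisely as in the odd case (Lemma~\ref{lemo2n}), adapting it to the single genuine change forced by parity. By Lemma~\ref{lem<}, for $k\le 2mn$ both $S^{<}_m(n,k)$ and $S^{>}_m(n,k)$ are bipartite; write $R_1\cup R_2$ and $\overline{R}_1\cup\overline{R}_2$ for their bipartitions, with $\overline{R}_i$ the reversal of $R_i$. I keep the up/down encoding $\tau\colon V(S_m(n,k))\to V(S_0(n-1,2))$ and the $3$-colouring of $S_0(n-1,2)$. The decisive new feature is that $n-1$ is now \emph{odd}: an increasing word has $\tau$-form $o_1$ and a decreasing word has form $o_0$, so the monotone words no longer occupy a separate colour (as $e_1,e_0$ did for odd $n$) but are buried inside the Blue and Green classes.

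First I would construct the target graph $H$ of Step~1 by carving the monotone words out of Blue and Green: split the increasing words (form $o_1$) into the independent sets $R_1,R_2$, the decreasing words (form $o_0$) into $\overline{R}_1,\overline{R}_2$, and collect the remaining forms into a Red vertex ($e_0a_1,e_1a_0$), a Blue vertex ($o_1o_0a_1,o_1e_0b_1$) and a Green vertex ($o_0o_1a_0,o_0e_1b_0$); since increasing and decreasing words are never adjacent, $R_1$ and $\overline{R}_1$ (and, if convenient, $R_2$ and $\overline{R}_2$) may be merged. Each fibre of the resulting map $f$ is independent --- the monotone fibres by bipartiteness; the others because two adjacent words of a common colour either share a $\tau$-value (which forces monotonicity, excluded by their form) or project to an edge of $S_0(n-1,2)$ (excluded since its colouring is proper) --- so $f$ is a homomorphism. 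I would then read off $E(H)$ by cataloguing, for one word of each class, the forms of its four single-letter shift-neighbours. The computation to highlight is that for even $n$ a left shift of an increasing word appending a smaller letter has form $e_1a_0$ (Red) and a right shift prepending a larger letter has form $o_0e_1b_0$ (Green); thus \emph{increasing words are adjacent to Red}. This is exactly what fails in the odd case, where $R_1,\overline{R}_1$ were merged with Red into a single source vertex, and it is the structural reason the even case behaves differently.

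In Step~2 I would fix a topological (hence acyclic) orientation of $H$ and enumerate its shortcutting paths using the adjacency-matrix power test from the introductory Remark, then orient $S_m(n,k)$ by pulling this back through $f$. Unlike in Lemma~\ref{lemo2n}, the orientation of $H$ that I fix will not be semi-transitive: because increasing (and decreasing) words are adjacent to Red, $H$ carries one or more genuine shortcuts, and this is precisely the promised instance in which the orientation of $H$ need not be semi-transitive. What must remain true is only that the induced orientation of $S_m(n,k)$ is shortcut-free.

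Step~3 is the heart of the argument and the main obstacle. For each shortcutting path of $H$ I must show that no directed path of $S_m(n,k)$ lying over it, together with its long edge, induces a shortcut. As in Lemma~\ref{lemo2n}, the anchor of such a path is a monotone word $x_1x_2\ldots x_n$; adjacency with its neighbours on the path forces them to be prescribed shifts (append or prepend a letter obeying a fixed inequality), and tracing these forced shifts shows the two endpoints can be simultaneously adjacent to their partners only if two path-vertices coincide (the ``$\omega_{R'}=\omega_{R_2}$'' collision of Lemma~\ref{lemo2n}) or if some word is forced into two different bipartition classes at once. The inner letters $x_2,\ldots,x_{n-1}$ are what pin the word down, which is where the hypothesis $n\ge 6$ enters, just as $n\ge 5$ was used in the odd case. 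Carrying out this case analysis for every shortcutting path of the non-semi-transitive $H$ --- in particular verifying that the genuine shortcuts of $H$ never lift --- is the bulk of the work; once done, the orientation of $S_m(n,k)$ is shortcut-free, and Theorem~\ref{semi-trans-thm} yields word-representability for all even $n\ge 6$ and $k\le 2mn$.
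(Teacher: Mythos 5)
Your outline correctly reproduces the paper's strategy for this lemma --- Lemma~\ref{lem<} for the two bipartitions, the same encoding $\tau$ into $S_0(n-1,2)$, the observation that for even $n$ the monotone words have forms $o_1$ and $o_0$ and hence sit inside Blue and Green rather than forming their own colour, and even the correct prediction that the orientation of $H$ will fail to be semi-transitive. But there is a genuine gap: Steps~2 and~3 are left entirely schematic. You never exhibit an orientation of $H$, never enumerate its shortcutting paths, and explicitly defer the verification that none of them lifts (``the bulk of the work''). That verification \emph{is} the lemma. In the paper it consists of concrete computations: for the path $G_2\rightarrow R_1\rightarrow G_1\rightarrow B_4$ one pins down $\omega_{G_2}=x_1x_2\ldots x_n$ decreasing, deduces $\omega_{R_1}=x_2\ldots x_ny$, $\omega_{G_1}=x_1'x_2\ldots x_n$ with $x_1'\neq x_1$, and $\omega_{B_4}=zx_1'x_2\ldots x_{n-1}$, so that $\omega_{G_2}\omega_{B_4}\in E$ would force $x_1=x_1'$, a contradiction; and for the path through $G_3$ one uses $n\geq 6$ to show that a word of form $o_0o_1a_0$ arising as a double shift of a decreasing word has \emph{no} neighbour of form $o_1e_0b_1$ whatsoever. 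Nothing at this level of specificity appears in your proposal, and your gesture at ``the $\omega_{R'}=\omega_{R_2}$ collision'' imports the odd-case mechanism, which is not in fact how the even-case paths are killed.

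A second, related problem is that your target graph is coarser than the paper's in a way that likely breaks the method. The paper keeps the two Red forms $e_0a_1$ and $e_1a_0$ as \emph{distinct} vertices $R_1,R_2$, and splits Blue into $B_3$ (form $o_1o_0a_1$) and $B_4$ (form $o_1e_0b_1$), Green into $G_3,G_4$, giving a ten-vertex $H$. This refinement is exactly what produces the sparse adjacency facts (neighbours of decreasing words land only in $G_1\cup G_2\cup R_1\cup B_4$; $G_4B_4$ is a non-edge; increasing words see only $R_2$ and $G_4$) and cuts the census of shortcutting paths to four. Your merged seven-vertex $H$ has a single Red vertex adjacent to all eight remaining classes, and merged Blue/Green vertices that, together with $G_1,G_2$ (resp.\ $B_1,B_2$), span complete subgraphs on four vertices; under any acyclic orientation this generates far more shortcutting paths, and it is unproven --- and doubtful --- that some orientation of this $H$ lifts shortcut-free. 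Similarly, merging $R_1$ with $\overline{R}_1$ is what the paper does in the \emph{odd} case only; here all four monotone classes must remain separate vertices. So while your parity computation (increasing words acquire Red and Green neighbours when $n$ is even) is correct and is indeed the structural reason the even case needs a new $H$, the proposal without the explicit ten-vertex graph, its orientation, and the lifting analysis is an announcement of the proof rather than the proof.
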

\begin{proof}
	By Lemma \ref{lem<}, $S^{<}_m(n,k)$ is bipartite, and we denote its parts by $B_1$ and $B_2$. Let $G_1\cup G_2$ be a partition of $A^n_m(k,>)$, where 
	$$G_i=\{x_nx_{n-1}\ldots x_1\ |\ x_1x_2\ldots x_n \in B_i\} \text{ for } i=1,2.$$
	Then $S^{>}_m(n,k)$ is also bipartite with parts $G_1\cup G_2$.
		
    For a vertex $\omega=x_1x_2\ldots x_n$ in $S_m(n,k)$, let $\tau(\omega)=y_1\ldots y_{n-1}$ where $y_i=0$ if $x_i>x_{i+1}$ and $y_i=1$ otherwise. Then $\tau$ is a mapping from $S_m(n,k)$ to $S_0(n-1,2)$. Recall that in the case of even $n$ the color classes of $S_0(n-1,2)$ are as follows:
    \begin{align*}
       \text{\bf Red: } e_0a_1, e_1a_0; &&\text{\bf Blue: }  o_1, o_1o_0a_1,  o_1e_0b_1; &&
       \text{\bf Green: } o_0, o_0o_1a_0, o_0e_1b_0.
    \end{align*}
Note that the form of $\tau(\omega)$ is $o_1$ (resp., $o_0$) if and only if $\omega$ is a vertex in $B_1\cup B_2$ (resp., $G_1\cup G_2$). 

        \begin{figure}
        	\begin{center}        			
        		\begin{tikzpicture}[->,>=stealth',shorten >=1pt,node distance=2cm,auto,main node/.style={circle,draw,align=center}]        			

        			\node[main node] (3) {\small $B_3$};
        			\node[main node] (2) [left of=3,xshift=-0.8284cm]{\small $B_1$};
        			\node[main node] (4) [right of=3,xshift=0.8284cm]{\small$G_2$};
        			\node[main node] (5) [below left of=3]{\small$G_4$};
        			\node[main node] (6) [below right of=3]{\small$B_4$};
        			\node[main node] (1) [right of=6,xshift=2cm] {\small $R_1$};
        			\node[main node] (10) [left of=5,xshift=-2cm] {\small $R_2$};
        			\node[main node] (7) [below left of=5]{\small$B_2$};
        			\node[main node] (8) [below right of=5]{\small$G_3$};
        			\node[main node] (9) [below right of=6]{\small$G_1$};
        			
        			\path
        			(1) edge [bend right=90] node {} (5);
        			\path
        			(4) edge node {} (1);
        			\path
        			(1) edge node {} (6);
        			\path
        			(1) edge node {} (9);
        			\path
        			(1) edge [bend left=60] node {} (8);
        			\path
        			(6) edge [bend left=90] node {} (10);
        			\path
        			(10) edge node {} (7);
        			\path
        			(2) edge node {} (10);
        			\path
        			(5) edge node {} (10);
        			\path
        			(3) edge [bend right=60] node {} (10);
        			\path
        			(2) edge node {} (7);
        			\path
        			(4) edge node {} (9);
        			\path
        			(5) edge node {} (2);
        			\path
        			(5) edge node {} (3);
        			\path
        			(5) edge node {} (7);
        			\path
        			(4) edge node {} (6);
        			\path
        			(8) edge node {} (6);
        			\path
        			(9) edge node {} (6);
        			\path
        			(8) edge node {} (3);

        		\end{tikzpicture}		
        	\end{center}
        	\caption{A graph $H$ and its orientation.}
        	\label{fig4}
        \end{figure}
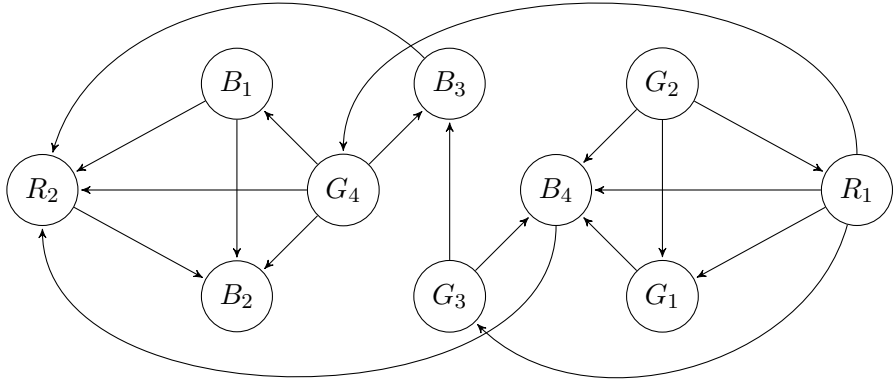
    
 {\bf Step~1.}       Let $H$ be the underlying graph with the  vertex set $$\{R_1, R_2, B_1, B_2, B_3, B_4, G_1, G_2, G_3, G_4\}$$
        in Figure~\ref{fig4}. 
Consider the following mapping $f$ from $S_m(n,k)$ to $H$.
        \begin{wst}
        	\item[\footnotesize $\bullet$] If the form of $\tau(\omega)$ is $e_0a_1$ then $f(\omega)=R_1$.
        	\item[\footnotesize $\bullet$] If the form of $\tau(\omega)$ is $e_1a_0$ then $f(\omega)=R_2$.
        	\item[\footnotesize $\bullet$] If $\omega\in B_1$ then $f(\omega)=B_1$.
        	\item[\footnotesize $\bullet$] If $\omega\in B_2$ then $f(\omega)=B_2$. 
        	\item[\footnotesize $\bullet$] If the form of $\tau(\omega)$ is $o_1o_0a_1$ then $f(\omega)=B_3$. 
        	\item[\footnotesize $\bullet$] If the form of $\tau(\omega)$ is $o_1e_0b_1$ then $f(\omega)=B_4$. 
        	\item[\footnotesize $\bullet$] If $\omega\in G_1$ then $f(\omega)=G_1$.
        	\item[\footnotesize $\bullet$] If $\omega\in G_2$ then $f(\omega)=G_2$. 
        	\item[\footnotesize $\bullet$] If the form of $\tau(\omega)$ is $o_0o_1a_0$ then $f(\omega)=G_3$.
        	\item[\footnotesize $\bullet$] If the form of $\tau(\omega)$ is $o_0e_1b_0$ then $f(\omega)=G_4$. 
        \end{wst}
Clearly, all sets of vertices of $S_m(n,k)$ mapped into the same vertex of $H$ are independent. For any vertex $\omega$ mapped into $G_1\cup G_2$, $\tau(\omega)$  has the form $o_0$; so, it may be adjacent only to a vertex $\omega'$ for which $\tau(\omega')$  has the form $o_1e_0$, or $e_0o_1$, or $o_0$. So, a neighbour of $\omega$ can only be mapped into  $G_1\cup G_2\cup R_1\cup B_4$.
Similarly,  a neighbour of a vertex from $B_1\cup B_2$ can only be mapped into $B_1\cup B_2\cup R_2\cup G_4$.
Also, if $\omega$ was mapped in $G_3$ then $\tau(\omega)$ has the form $o_0o_1a_0$ and for each of its neighbour $\omega'$, $\tau(\omega')$ has the form 
$e_0a_1$, or $o_1o_0a_1$, or $o_1a_0$. So, $\omega'$ must be in $R_1\cup B_3\cup B_4$. By similar arguments, any neighbour of a vertex from $B_3$ is mapped into $R_2\cup G_3\cup G_4$.

Finally, if  $\omega$ was mapped in $G_4$ then $\tau(\omega)$ has the form $o_0e_1b_0$ and for each of its neighbour $\omega'$,  $\tau(\omega')$ has the form $e_0a_1$, or $e_1a_0$, or $o_1$, or $o_1o_0a_1$. In either case, $\omega'$ was not mapped into $B_4$. So,  $f$ is a homomorphism from $S_m(n,k)$ to $H$. Then direct $H$ as in Figure~\ref{fig4}.

{\bf Step~2.} Note that
there are four shortcutting paths in $H$:  
	\begin{align*}
	 G_2\rightarrow R_1\rightarrow G_1\rightarrow B_4;&&
	 G_2\rightarrow R_1\rightarrow G_3\rightarrow B_4;\\
	 G_4\rightarrow B_1\rightarrow R_2\rightarrow B_2;&&
	 G_4\rightarrow B_3\rightarrow R_2\rightarrow B_2.
	\end{align*}

\noindent
Note that the shortcutting paths in the right column are shortcuts. Direct the edges of $S_m(n,k)$ as in $H$. Since the orientation of $H$ is acyclic, the orientation of $S_m(n,k)$ is also acyclic. We have to verify that the orientation contains no shortcuts.
	    
{\bf Step~3.}   Suppose that there exists a directed path $\omega_{G_2}\rightarrow\omega_{R_1}\rightarrow\omega_{G_1}\rightarrow\omega_{B_4}$ with $f(\omega_{X})=X$ for $X\in\{G_2,R_1,G_1,B_4\}$, and the path introduces a potential shortcutting edge, that is, $\omega_{G_2}\rightarrow\omega_{B_4}$.
Assume $\omega_{G_2}=x_1x_2\ldots x_n$ with $x_i\geq x_{i+1}+m$. Since the form of $\tau(\omega_{R_1})$ is $e_0x_1$ and $\omega_{G_2}\omega_{R_1}\in E(S_m(n,k))$, $\omega_{R_1}=x_2\ldots x_n y$ with $y\geq x_n+m$. Since $\omega_{R_1}\omega_{G_1}\in E(S_m(n,k))$, $\omega_{G_1}=x'_1x_2\ldots x_n$ with $x'_1\geq x_2 +m$ and $x'_1\neq x_1$. Then we have $\omega_{B_4}=zx'_1x_2\ldots x_{n-1}$ with $z\leq x'_1-m$. However,  $\omega_{G_2}\omega_{B_4}\in E(S_m(n,k))$ if and only if $x_1=x'_1$. Thus, $\omega_{G_2}\omega_{B_4}\notin E(S_m(n,k))$, a contradiction.

Then, we consider the shortcutting path $G_2\rightarrow R_1\rightarrow G_3\rightarrow B_4$. Suppose that there exists a directed path $\omega_{G_2}\rightarrow\omega_{R_1}\rightarrow\omega_{G_3}\rightarrow\omega_{B_4}$ with $f(\omega_{X})=X$ for $X\in\{G_2,R_1,G_3,B_4\}$ and also $\omega_{G_2}\rightarrow\omega_{B_4}$.
Assume $\omega_{G_2}=x_1x_2\ldots x_n$ with $x_i\geq x_{i+1}+m$. Then we have $\omega_{R_1}=x_2\ldots x_n y$ and $\omega_{G_3}= x_3\ldots x_n yz$ with $y\geq x_n+m$ and $z\leq y-m$. However, since $n\geq 6$, no neighbour of $\omega_{G_3}$ was mapped into $B_4$, a contradiction.

By similar arguments, no shortcut was mapped into the shortcutting paths $G_4\rightarrow B_1\rightarrow R_2\rightarrow B_2$ and 
$G_4\rightarrow B_3\rightarrow R_2\rightarrow B_2$. Hence, the orientation of $S_m(n,k)$ is shortcut-free and therefore it is semi-transitive. By Theorem~\ref{semi-trans-thm}, $S_m(n,k)$ is word-representable for all even $n\geq 6$ and $k\leq 2mn$.\end{proof}

	The remaining case of $n=4$ is proved in the following lemma.
	
	\begin{lem}\label{lemma-final}
		For $k\leq 5m$, $S_m(4,k)$ is word-representable.
	\end{lem}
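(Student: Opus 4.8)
The plan is to follow the same embedding template used in Lemma~\ref{lemo2n} and Lemma~\ref{leme2n}, adapted to the delicate case $n=4$. By Lemma~\ref{obs}, it suffices to treat $S_m(4,5m)$. First I would set up the bipartition data: by Lemma~\ref{lem<} the increasing subgraph $S^<_m(4,5m)$ is bipartite (since $k=5m\le 2m\cdot 4=8m$), with parts I will call $B_1,B_2$; reversing words transports this to a bipartition $G_1,G_2$ of the decreasing subgraph $S^>_m(4,5m)$. As before, define $\tau(\omega)=y_1y_2y_3$ with $y_i=0$ if $x_i>x_{i+1}$ and $y_i=1$ otherwise, giving a homomorphism into $S_0(3,2)$. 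Since $n-1=3$ is odd, I would use the odd-length color classes of $S_0(3,2)$ recorded in the excerpt, and note that the form $o_1$ (resp.\ $o_0$) of $\tau(\omega)$ corresponds exactly to $\omega$ being increasing (resp.\ decreasing), i.e.\ lying in $B_1\cup B_2$ (resp.\ $G_1\cup G_2$).

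Next (Step~1) I would construct an auxiliary word-representable graph $H$ on a refined set of color blocks — splitting the increasing and decreasing parts into their bipartition classes and separating the mixed forms $e_0a_1,e_1a_0$ and the $o_1e_0b_1$-type and $o_0e_1b_0$-type words into distinct vertices, in the spirit of Figure~\ref{fig4}. The map $f$ sends each vertex of $S_m(4,5m)$ to the block its $\tau$-form belongs to (with increasing/decreasing words routed to their $B_i$/$G_i$ class). I would then verify, form by form, that $f$ is a homomorphism: for each block I must check which forms a neighbour's $\tau$-value can take (a neighbour is obtained by deleting the first letter and appending, or deleting the last and prepending), and confirm the resulting block is adjacent in $H$. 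I would fix an acyclic orientation of $H$ so that $H$ is semi-transitive, or at least so that its shortcutting paths (Step~2, found via the adjacency-matrix criterion in the Remark) are few and explicit.

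The crux is Step~3: ruling out actual shortcuts in $S_m(4,5m)$ along each shortcutting path of $H$. For a candidate directed path $\omega_0\to\omega_1\to\omega_2\to\omega_3$ with the potential chord $\omega_0\to\omega_3$, I would fix coordinates of the "middle" vertex (say $\omega_2=x_1x_2x_3x_4$ with the appropriate monotonicity forced by its block), then use the de~Bruijn overlap relations to express $\omega_1,\omega_3$, and finally $\omega_0$, in terms of these letters. The two adjacency constraints on $\omega_0$ (to $\omega_1$ and to $\omega_3$) force $\omega_0$'s form into two prescribed sets whose intersection is severely limited; I expect that the only common possibility collapses $\omega_0$ back to $\omega_2$ (a contradiction), exactly as in the earlier lemmas. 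The main obstacle — and why $n=4$ is separated out — is that with word length only $4$ the overlaps between the four vertices are tight, so the short words leave little room and some forms that were excluded "because $n\ge 5$" or "because $n\ge 6$" in Lemmas~\ref{lemo2n} and~\ref{leme2n} can genuinely occur here. I therefore anticipate needing the explicit arithmetic bound $k=5m$ (equivalently the restricted ranges of first/last letters of increasing words, as exploited for the $R_1/R_2$ split in Lemma~\ref{lemma-6}) to kill the extra cases, handling each shortcutting path of $H$ by a short but genuinely case-dependent calculation rather than a single uniform argument.
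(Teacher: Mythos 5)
Your plan follows essentially the same route as the paper's proof: reduce to $S_m(4,5m)$ via Lemma~\ref{obs}, reuse the ten-vertex graph $H$ of Lemma~\ref{leme2n} (with a re-chosen, semi-transitive orientation), obtain the bipartition of $S^<_m(4,5m)$ together with the explicit first/last-letter ranges exactly as in the $A,B,C$ split of Lemma~\ref{lemma-6}, and eliminate the few shortcutting paths of $H$ by overlap-and-range arithmetic. The only cosmetic deviation is the anticipated contradiction: in the paper's two cases ($R_1\rightarrow G_2\rightarrow G_1\rightarrow B_4$ and $R_2\rightarrow G_4\rightarrow B_2\rightarrow B_1$) the letter ranges force the endpoints of the candidate shortcut to be non-adjacent, rather than collapsing $\omega_0$ onto the middle vertex, but this falls squarely within the case-dependent calculations you foresee.
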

	\begin{proof}
		By Lemma~\ref{obs}, it is sufficient to show that $S_m(4,5m)$ is word-representable.
		Let $A$, $B$ and $C$ be the following subsets of $A^4_m(5m,<)$:
		\begin{align*}
			A&:=\{x_1x_2x_3x_4\in A^4_m(5m,<):0\leq x_1\leq m-1 \text{ and } 4m\leq x_4\leq 5m-1\};\\
			B&:=\{x_1x_2x_3x_4\in A^4_m(5m,<):m\leq x_1\leq 2m-1 \text{ and } 4m\leq x_4\leq 5m-1\};\\
			C&:=\{x_1x_2x_3x_4\in A^4_m(5m,<):0\leq x_1\leq m-1 \text{ and } 3m\leq x_4\leq 4m-1\}.
		\end{align*}
		Since $x_4-x_1\geq 3m$ for $x_1x_2x_3x_4\in A^4_m(5m,<)$, $A\cup B\cup C$ is a partition of $A^4_m(5m,<)$. Note that any vertex in $A$ is an isolated vertex in $S^<_m(4,5m)$ and all edges in $S^<_m(4,5m)$ are between $B$ and $C$. Let $B_1=A\cup B$ and $B_2=C$. Then $S_m^<(4,5m)$ is bipartite with parts $B_1\cup B_2$.
		Let $G_1\cup G_2$ be a partition of $A^4_m(5m,>)$, where 
		$$G_i=\{x_4x_3x_2x_1\ |\ x_1x_2x_3x_4 \in B_i\} \text{ for } i=1,2.$$
		Then $S^{>}_m(4,5m)$ is also bipartite with parts $G_1\cup G_2$.

		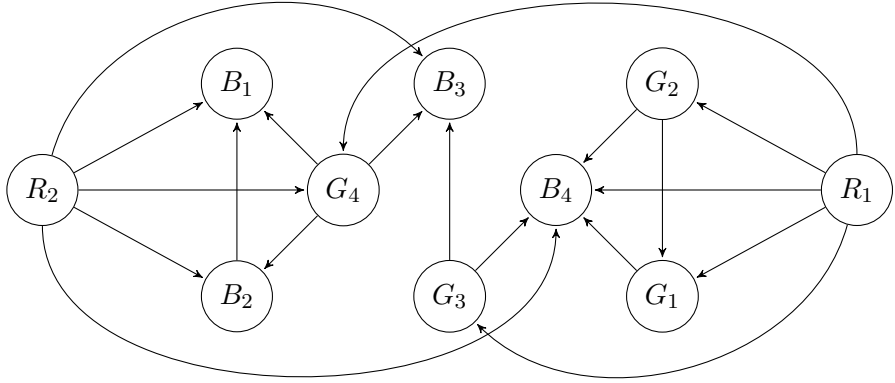
\begin{figure}
			\begin{center}        			
				\begin{tikzpicture}[->,>=stealth',shorten >=1pt,node distance=2cm,auto,main node/.style={circle,draw,align=center}]        			
					\node[main node] (3) {\small $B_3$};
					\node[main node] (2) [left of=3,xshift=-0.8284cm]{\small $B_1$};
					\node[main node] (4) [right of=3,xshift=0.8284cm]{\small$G_2$};
					\node[main node] (5) [below left of=3]{\small$G_4$};
					\node[main node] (6) [below right of=3]{\small$B_4$};
					\node[main node] (1) [right of=6,xshift=2cm] {\small $R_1$};
					\node[main node] (10) [left of=5,xshift=-2cm] {\small $R_2$};
					\node[main node] (7) [below left of=5]{\small$B_2$};
					\node[main node] (8) [below right of=5]{\small$G_3$};
					\node[main node] (9) [below right of=6]{\small$G_1$};
					
					\path
					(1) edge [bend right=90] node {} (5);
					\path
					(1) edge node {} (4);
					\path
					(1) edge node {} (6);
					\path
					(1) edge node {} (9);
					\path
					(1) edge [bend left=60] node {} (8);
					\path
					(10) edge [bend right=90] node {} (6);
					\path
					(10) edge node {} (7);
					\path
					(10) edge node {} (2);
					\path
					(10) edge node {} (5);
					\path
					(10) edge [bend left=60] node {} (3);
					\path
					(7) edge node {} (2);
					\path
					(4) edge node {} (9);
					\path
					(5) edge node {} (2);
					\path
					(5) edge node {} (3);
					\path
					(5) edge node {} (7);
					\path
					(4) edge node {} (6);
					\path
					(8) edge node {} (6);
					\path
					(9) edge node {} (6);
					\path
					(8) edge node {} (3);

				\end{tikzpicture}		
			\end{center}
			\caption{A graph $H$ and its orientation.}
			\label{fig6}
		\end{figure}
		
{\bf Steps~1,~2.}		In what follows, we use the same notation as in Lemma~\ref{leme2n} but a different orientation of $H$ given in Figure~\ref{fig6}. 
		Direct $S_m(4,5m)$ as in $H$ and note that 
		$H$ contains only two shortcutting paths of length at least 3:  
		\begin{align*}
			R_1\rightarrow G_2\rightarrow G_1\rightarrow B_4;&&
			R_2\rightarrow G_4\rightarrow B_2\rightarrow B_1.
		\end{align*}
		
{\bf Step~3.}		Suppose that there exists a directed path $\omega_{R_1}\rightarrow\omega_{G_2}\rightarrow\omega_{G_1}\rightarrow\omega_{B_4}$ with $f(\omega_{X})=X$ for all $X\in \{R_1,G_2,G_1,B_4\}$, and the path introduces a potential shortcutting edge, that is, $\omega_{R_1}\rightarrow\omega_{B_4}$.
		By definitions of $G_2$ and $G_1$, assume that $\omega_{G_2}=x_2x_3x_4x_5$ and $\omega_{G_1}=x_1x_2x_3x_4$, where $x_i\geq x_{i+1}+m$, $4m\leq x_1\leq 5m-1$ and $0\leq x_5\leq m-1$. Then, we have $\omega_{R_1}=x_3x_4x_5y$ with $y\geq x_5+m$ and $\omega_{B_4}=zx_1x_2x_3$ with $z\leq x_1-m$. However, $\omega_{R_1}$ is not adjacent to $\omega_{B_4}$ in $S_m(4,5m)$, a contradiction. So there are no shortcuts induced by $\{\omega_{R_1},\omega_{G_2},\omega_{G_1},\omega_{B_4}\}$. 

		Suppose  now that there exists a directed path $\omega_{R_2}\rightarrow\omega_{G_4}\rightarrow\omega_{B_2}\rightarrow\omega_{B_1}$ with $f(\omega_{X})=X$ for all $X\in \{R_2,G_4,B_2,B_1\}$ and also $\omega_{R_2}\rightarrow\omega_{B_1}$. Assume that 
$\omega_{B_1}=x_2x_3x_4x_5$ and $\omega_{B_2}=x_1x_2x_3x_4$, where $x_i\leq x_{i+1}+m$, $4m\leq x_5\leq 5m-1$ and $0\leq x_1\leq m-1$. 
 Then, we have $\omega_{R_2}=x_3x_4x_5y$ with $y\leq x_5-m$ and $\omega_{G_4}=zx_1x_2x_3$ with $z\geq x_1+m$. But then $\omega_{R_2}$ is not adjacent to $\omega_{G_4}$ in $S_m(4,5m)$, a contradiction. So, 
 there are no shortcuts induced by $\{\omega_{R_2},\omega_{G_4},\omega_{B_2},\omega_{B_1}\}$. Hence, the orientation of $S_m(4,5m)$ is shortcut-free and therefore it is semi-transitive.  By Theorem~\ref{semi-trans-thm}, $S_m(4,5m)$ is word-representable, as desired. \end{proof}

\section{Conclusion}
    In this paper, we introduce a novel approach to study word-representability of graphs with the help of homomorphisms. In the proof of Theorem~\ref{thmspn}, we find a homomorphism $f$ from $SP(n)$ to a 3-colorable graph $S_0(n-1,2)$. In the proof of Theorem~\ref{thmsnk}, we find a homomorphism $f$ from $S_m(n,k)$ to a graph with 5 vertices (for odd $n$) or a graph with 10 vertices (for even $n$).

    As the result, we have proved:

$\bullet$\ For $n=2$, $S_m(2,k)$ is word-representable if and only if $k\leq 3m$;

$\bullet$\ For $n=3$, $S_m(3,k)$ is word-representable if $k\leq 4m$;

$\bullet$\ For $n=4$, $S_m(4,k)$ is word-representable if $k\leq 5m$;

$\bullet$ For $n\ge 5,\ S_m(n,k)$ is word-representable if $k\leq 2mn$.

\noindent
We leave it as an open problem whether or not the ``if'' in the last three statements can be replaced by ``if and only if''. \\

\noindent
{\bf Acknowledgement.} The authors are grateful to the anonymous referees for their valuable comments.


\section*{Appendix: Proof of non-word-representability of $S_1(2,4)$}
Our proof uses the following lemmas.

\begin{lem}[\cite{KitaevSun}]\label{appendix1}
	Suppose that an undirected graph $G$ has a cycle $C=x_1x_2\cdots x_mx_1$, where $m\geq 4$ and the vertices in $\{x_1,x_2,\ldots,x_m\}$ do not induce a clique in $G$.  If $G$ is oriented semi-transitively, and $m-2$ edges of $C$ are oriented in the same direction then the remaining two edges of $C$ are oriented in the opposite direction.
\end{lem}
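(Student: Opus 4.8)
The plan is to fix an orientation of the cycle $C$ by its cyclic traversal $x_1\to x_2\to\cdots\to x_m\to x_1$ and to call an edge \emph{forward} if it is oriented in the direction of this traversal (i.e.\ $x_i\to x_{i+1}$, indices read modulo $m$) and \emph{backward} otherwise. By hypothesis $m-2$ of the edges are forward, and I want to show that each of the two remaining edges is backward. There are three a priori possibilities for the two remaining edges: both forward, exactly one backward, or both backward. First I would dispose of the ``both forward'' case: this would make all $m$ edges forward, so $C$ would be a directed cycle, contradicting the acyclicity that is part of any semi-transitive orientation.

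The crux is to rule out the case in which exactly one of the two remaining edges is backward, i.e.\ the case of $m-1$ forward edges and a single backward edge. Suppose the unique backward edge joins $x_j$ and $x_{j+1}$ and is oriented $x_{j+1}\to x_j$. Reading the cycle starting at $x_{j+1}$ and following the $m-1$ forward edges then produces a directed path
$$x_{j+1}\to x_{j+2}\to\cdots\to x_m\to x_1\to\cdots\to x_j$$
of length $m-1\ge 3$, whose endpoints are joined by the arc $x_{j+1}\to x_j$. Thus the vertices of $C$ carry a directed path from $x_{j+1}$ to $x_j$ together with the shortcutting arc $x_{j+1}\to x_j$; since the global orientation is acyclic, so is the induced suborientation on $\{x_1,\dots,x_m\}$.

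Now I would invoke semi-transitivity. Because the orientation is shortcut-free, the configuration above cannot be a shortcut; being acyclic and containing both the directed path and the shortcutting arc, the induced suborientation on $\{x_1,\dots,x_m\}$ must therefore be transitive, i.e.\ it must contain an arc between every pair of its vertices. But this says exactly that $\{x_1,\dots,x_m\}$ induces a complete graph, contradicting the assumption that these vertices do not form a clique. Hence the single-backward-edge case is impossible, and the only surviving possibility is that both remaining edges are backward, which is the desired conclusion.

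I expect the only genuinely delicate point to be the bookkeeping in the middle step: one must check that a single reversed edge, together with the $m-1$ forward edges, really does assemble into one long directed path closed off by the shortcutting arc. This is where the hypothesis $m\ge 4$ is essential, so that the path has length at least $3$ and transitivity genuinely forces all $\binom{m}{2}$ arcs (for $m=3$ the three-cycle is automatically a transitive triangle and a clique, so the statement would fail). Everything else is a direct application of the definitions of acyclicity and of a shortcut.
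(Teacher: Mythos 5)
Your proof is correct: the case analysis (both remaining edges forward, exactly one backward, both backward) is exhaustive, the directed cycle is killed by acyclicity, and in the single-backward-edge case the $m-1$ forward edges do assemble into a directed path from $x_{j+1}$ to $x_j$ closed off by the arc $x_{j+1}\to x_j$, whence semi-transitivity forces all $\binom{m}{2}$ arcs and hence a clique on $\{x_1,\ldots,x_m\}$, contradicting the hypothesis. The paper itself gives no proof of this lemma, citing it from \cite{KitaevSun}, and your argument is precisely the standard one used there, so there is nothing to flag beyond the minor wording point that transitivity alone does not mean completeness --- it is transitivity together with the spanning directed path that forces every pair to be adjacent, a point you do address correctly in your final paragraph.
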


\begin{lem}[\cite{KitaevSun}]\label{appendix2}
	If $G$ is word-representable and $u$ is an arbitrary vertex in $G$, then there exists a semi-transitive orientation of $G$ with source $u$.
\end{lem}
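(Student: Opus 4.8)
The plan is to prove the statement through words rather than by directly manipulating orientations, because a naive attempt to ``make $u$ a source'' by reversing its in-arcs, or by deleting $u$ and re-attaching it as a blanket source, fails: a vertex $u$ may have two neighbours joined by a directed path through non-neighbours of $u$, and such a configuration would become a shortcut. Instead I would exploit a rotation-invariance property of \emph{uniform} word representations. First I would invoke the foundational fact that a word-representable graph $G$ admits a $k$-uniform word $w$ over $V(G)$ (each letter occurring exactly $k$ times) that represents $G$.

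The key step is a lemma stating that every cyclic shift of a uniform representation represents the same graph. I would prove this by a parity argument applied to each pair of letters $x,y$. Placing $w$ on a circle and restricting to $\{x,y\}$ gives a circular word with $k$ copies of each letter; if $b$ denotes the common number of maximal $x$-runs and $y$-runs, then the number of equal-letter circular adjacencies equals $2k-2b$, which is even. A given linear rotation alternates on $\{x,y\}$ if and only if the corresponding cut of the circular restricted word removes \emph{all} equal-letter adjacencies; but a single cut destroys at most one adjacency, while the total is even (hence $0$ or $\ge 2$). Therefore the alternation status of $\{x,y\}$ is the same for every rotation, and so every cyclic shift of $w$ preserves the alternation of each pair and represents $G$. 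It is crucial here that $w$ be uniform, since cyclic-shift invariance genuinely fails for non-uniform words; this is exactly why passing to a uniform representation first is essential.

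With the lemma in hand, I would cyclically shift $w$ to begin at an occurrence of $u$, obtaining a word $w'$ that represents $G$ and whose first letter is $u$. I then orient each edge $xy$ of $G$ by setting $x\to y$ whenever the first occurrence of $x$ precedes that of $y$ in $w'$. This is precisely the orientation used to establish the ``only if'' direction of Theorem~\ref{semi-trans-thm}, and it is semi-transitive. Since $u$ is the very first letter of $w'$, its first occurrence precedes that of every other letter, so every edge incident to $u$ is oriented away from $u$; hence $u$ is a source. This produces the desired semi-transitive orientation of $G$ with source $u$.

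I expect the parity-based cyclic-shift lemma to be the main obstacle, as it is the only non-routine ingredient; once it is in place, the choice of rotation and the source property are immediate, and semi-transitivity is inherited from the standard word-to-orientation construction underlying Theorem~\ref{semi-trans-thm}.
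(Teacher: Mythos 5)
Your proposal is correct and coincides with the standard proof of this lemma in the cited literature: the paper itself imports the statement from \cite{KitaevSun} without proof, and the argument there is exactly your route --- pass to a $k$-uniform representant, use the fact that cyclic shifts of a uniform word represent the same graph, rotate so the word begins with $u$, and take the first-occurrence orientation, which is semi-transitive by the proof of Theorem~\ref{semi-trans-thm} and has $u$ as a source. Your parity argument (equal-letter circular adjacencies number $2k-2b$, hence even, while a single cut removes at most one) is a valid self-contained proof of the cyclic-shift invariance, which in the literature is usually quoted as a known property of uniform representations.
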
 

The proof is a tedious case-analysis with many similar procedures. So, we use some encoding for them that allows to shorten the text drastically.
By a ``line'' of a proof we mean a sequence of instructions that directs us in orienting a partially oriented graph and necessarily ends with detecting a shortcut. Each line of the proof is marked by its  number in {\bf bold} font. There are four types of instructions:

\begin{wst}
	\item[\footnotesize $\bullet$] ``B'' followed by ``$X\rightarrow Y$ (Copy $Z$)'' reads ``Branch on edge $XY$''. This means that we make a copy of the current graph (it is called Copy $Z$), direct the edge $X\rightarrow Y$ and proceed further. Note that Copy $Z$ will be considered later with an opposite orientation $Y\rightarrow X$.
	\item[\footnotesize $\bullet$] ``MC'' followed by a number $X$ means ``Move to Copy $X$''. Each line except for the first one must start with this instruction.  Moreover, this 
instruction is always followed by  ``$Y\rightarrow X$'' that orients a branching edge in the opposite way.
	\item[\footnotesize $\bullet$] One or two ``O'' followed by ``$X_i\rightarrow Y_i$'' together with ``(C'' followed by a cycle ``$Z_1-\cdots-Z_k$)''. This instruction means
``Orient the listed edges $X_i\rightarrow Y_i$ since otherwise the cycle $Z_1-\cdots-Z_k$ either becomes directed or contradicts Lemma~\ref{appendix1}.
	\item[\footnotesize $\bullet$] ``$S:X_1-\cdots-X_k$'' means ``Verify that the path $X_1-\cdots-X_k$'' induces a shortcut with the shortcutting edge $X_1\rightarrow X_k$. This instruction concludes each line of the proof.
\end{wst}
We refer to \cite{KitaevSun} for more details on the format of the proof below.
			
\begin{figure}
	\centering
	\includegraphics[width=80mm]{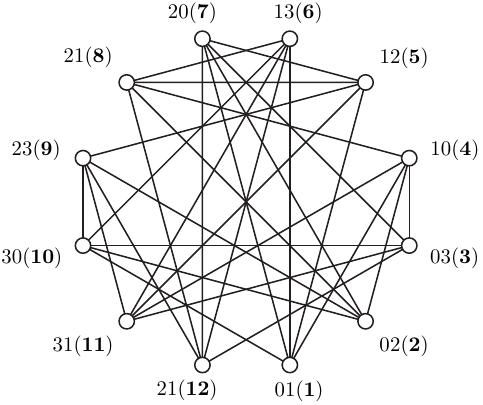}
	\caption{$S_1(2,4)$. The bold numbers are the labels of vertices in $S_1(2,4)$.}
	\label{fig5}
\end{figure}

\noindent
{\bf The proof:} We label the vertices in $S_1(2,4)$ in brackets in bold in Figure~\ref{fig5}. By Lemma~\ref{appendix2}, without loss of generality, assume vertex 6 is a source in a semi-transitive orientation. The rest of the proof goes as follows. \\[-3mm]

\begin{footnotesize}
	\noindent
	{\bf 1.} B8$\rightarrow$12 (Copy 2) B1$\rightarrow$10 (Copy 3) B7$\rightarrow$12 (Copy 4) O7$\rightarrow$ 1 (C1-7-12-6) O2$\rightarrow$10 O7$\rightarrow$2 (C1-10-2-7) 
O8$\rightarrow$2 (C2-8-12-7) S:6-8-2-10
	
	\noindent
	{\bf 2.} MC4 12$\rightarrow$7 O1$\rightarrow$7 (C1-7-12-6) O8$\rightarrow$2 O2$\rightarrow$7 (C2-8-12-7) O2$\rightarrow$10 (C1-10-2-7) S:6-8-2-10
	
	\noindent
	{\bf 3.} MC3 10$\rightarrow$1 B7$\rightarrow$12 (Copy 5) O7$\rightarrow$1 (C1-7-12-6) B9$\rightarrow$10 (Copy 6) O9$\rightarrow$5 O5$\rightarrow$1 (C1-10-9-5) O5$\rightarrow$8 (C1-6-8-5) O5$\rightarrow$11 (C1-6-11-5) O9$\rightarrow$11 (C5-11-9) O9$\rightarrow$2 O2$\rightarrow$8 (C2-9-5-8) O2$\rightarrow$7 (C2-8-12-7) O2$\rightarrow$4 O4$\rightarrow$1 (C1-7-2-4) 
O4$\rightarrow$11 (C1-5-11-4) S:9-2-4-11
	
	\noindent
	{\bf 4.} MC6 10$\rightarrow$9 
O12$\rightarrow$9 (C6-12-9-10) O2$\rightarrow$9 O7$\rightarrow$2 (C2-9-12-7) 
O5$\rightarrow$9 O7$\rightarrow$5 (C2-9-5-7) O5$\rightarrow$1 (C1-10-9-5) O5$\rightarrow$8 (C1-6-8-5) S:5-8-12-9
	
	\noindent
	{\bf 5.} MC5 12$\rightarrow$7 O1$\rightarrow$7 (C1-7-12-6) O10$\rightarrow$2 O2$\rightarrow$7 (C1-10-2-7) 
O8$\rightarrow$2 (C2-8-12-7) O8$\rightarrow$5 O5$\rightarrow$7 (C2-8-5-7) O1$\rightarrow$5 (C1-6-8-5) O10$\rightarrow$9 O9$\rightarrow$5 (C1-10-9-5) 
O9$\rightarrow$2 (C2-9-5-7) O9$\rightarrow$12 (C2-9-12-7) S:6-10-9-12
	
	\noindent
	{\bf 6.} MC2 12$\rightarrow$8 B1$\rightarrow$10 (Copy 7) B7$\rightarrow$12 (Copy 8) O7$\rightarrow$1 (C1-7-12-6) O2$\rightarrow$10 O7$\rightarrow$2 (C1-10-2-7) 
O2$\rightarrow$8 (C2-8-12-7) O5$\rightarrow$8 O7$\rightarrow$5 (C2-8-5-7) O5$\rightarrow$1 (C1-6-8-5) O9$\rightarrow$10 O5$\rightarrow$9 (C1-10-9-5) 
O2$\rightarrow$9 (C2-9-5-7) O12$\rightarrow$9 (C2-9-12-7) S:6-12-9-10
	
	\noindent
	{\bf 7.} MC8 12$\rightarrow$7 O1$\rightarrow$7 (C1-7-12-6) B9$\rightarrow$10 (Copy 9) 
O9$\rightarrow$12 (C6-12-9-10) O9$\rightarrow$2 O2$\rightarrow$7 (C2-9-12-7) 
O9$\rightarrow$5 O5$\rightarrow$7 (C2-9-5-7) O1$\rightarrow$5 (C1-10-9-5) O8$\rightarrow$5 (C1-6-8-5) S:9-12-8-5
	
	\noindent
	{\bf 8.} MC9 10$\rightarrow$9 O5$\rightarrow$9 O1$\rightarrow$5 (C1-10-9-5) O8$\rightarrow$5 (C1-6-8-5) O11$\rightarrow$5 (C1-6-11-5) O11$\rightarrow$9 (C5-11-9) O2$\rightarrow$9 O8$\rightarrow$2 (C2-9-5-8) O7$\rightarrow$2 (C2-8-12-7) O4$\rightarrow$2 O1$\rightarrow$4 (C1-7-2-4) 
O11$\rightarrow$4 (C1-5-11-4) S:11-4-2-9
	
	\noindent
	{\bf 9.} MC7 10$\rightarrow$1 B7$\rightarrow$12 (Copy 10) O7$\rightarrow$1 (C1-7-12-6) O2$\rightarrow$8 O7$\rightarrow$2 (C2-8-12-7) O10$\rightarrow$2 (C1-10-2-7) S:6-10-2-8
	
	\noindent
	{\bf 10.} MC10 12$\rightarrow$7 O1$\rightarrow$7 (C1-7-12-6) O10$\rightarrow$2 O2$\rightarrow$7 (C1-10-2-7) O10$\rightarrow$3 
 O2$\rightarrow$8 (C2-8-12-7) S:6-10-2-8
\end{footnotesize}


\end{document}